\newtheorem{theorem}{Theorem}[section]
\newtheorem{proposition}[theorem]{Proposition}
\newtheorem{lemma}[theorem]{Lemma}
\theoremstyle{definition}
\newtheorem{example}[theorem]{Example}
\newtheorem{definition}[theorem]{Definition}
\newtheorem{remark}[theorem]{Remark}
\newcommand{\bbF}{\mathbb{F}}
\newcommand{\bbk}{\mathbbm{k}}
\newcommand{\calP}{{\mathcal{P}}}
\newcommand{\newname}{factor }
\newcommand{\newnamecap}{Factor }
\newcommand{\fC}{\Delta_\cap}
\DeclareMathOperator{\fI}{{\operatorname{FI}}}
\DeclareMathOperator{\CF}{{\operatorname{CF}}}
\title{Neural Codes and the Factor Complex}
\author{Alexander Ruys de Perez} 
\address{Department of Mathematics\\Texas A\&M University\\College Station, TX 77843}
\email{alperez@math.tamu.edu}
\author{Laura Felicia Matusevich}
\email{laura@math.tamu.edu}
\author{Anne Shiu}
\email{annejls@math.tamu.edu}
\date{}
\begin{document}

\maketitle

\begin{abstract}
We introduce the \emph{factor complex} of a neural code, and show
how intervals and maximal codewords are captured by the
combinatorics of factor complexes. We use these results to obtain
algebraic and combinatorial characterizations of max-intersection-complete codes, as well as a new combinatorial
characterization of intersection-complete codes.
\end{abstract}

\section{Introduction} \label{sec:intro}
A {\em neural code} on $n$ neurons is a subset of $2^{[n]}$, where $[n]=\{1,2,\dots,n\}$; 
determining which neural codes are convex remains a central open
problem in this area.
The broadest family of codes known to be convex consists of 
\emph{max-intersection-complete codes}, those codes closed under taking intersections of maximal
elements~\cite{cruz2019open,curto2018algebraic}.   
Recently, Curto et
  al.~\cite{curto2018algebraic} asked for an algebraic
signature for max-intersection-complete codes.

Here we answer the question of Curto et al. 
Our main result,
Theorem~\ref{thm:main-intro} below, gives a characterization for when
a code is max-intersection-complete in terms of the canonical form of its neural ideal
(Definitions~\ref{def:neuralIdeal} and~\ref{def:canonicalForm})
and the Stanley--Reisner ideal $I(
\Delta(C))$ of its simplicial complex $\Delta(C)$
(Definitions~\ref{def:StanleyReisner} and~\ref{def:simplicialComplexCode}).

\begin{theorem} \label{thm:main-intro}
A code $C$ on $n$ neurons is max-intersection-complete if and only if for every non-monomial $\phi$
in the canonical form of the neural ideal of $C$, there exists $i\in [n]$ such that 
\begin{itemize}
    \item[(i)] every associated prime of $I(\Delta(C))$ that contains $x_i$ also contains $\phi$, and 
    \item[(ii)] $(1-x_i)\mid \phi$.
    \end{itemize}
\end{theorem}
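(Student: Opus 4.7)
The plan is to translate the algebraic conditions on $I(\Delta(C))$ into combinatorial statements about the facets of $\Delta(C)$, and then prove both implications by short contradiction arguments using the defining property of pseudo-monomials in the neural ideal.

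Since $I(\Delta(C))$ is a squarefree monomial ideal, hence radical, its associated primes are its minimal primes $P_F = (x_j : j \notin F)$, indexed by the facets $F$ of $\Delta(C)$, which coincide with the maximal codewords of $C$. A direct computation modulo $P_F$ shows that for a pseudo-monomial $\phi = \prod_{j \in \sigma} x_j \prod_{k \in \tau}(1-x_k)$, we have $\phi \in P_F$ iff $\sigma \not\subseteq F$, and $x_i \in P_F$ iff $i \notin F$. Hence condition (i) translates to: every facet of $\Delta(C)$ containing $\sigma$ also contains $i$; and condition (ii) translates to $i \in \tau$. The theorem thus reads: $C$ is max-intersection-complete iff for every non-monomial canonical-form element $\phi$ (with $\sigma, \tau$ as above and $\tau \neq \emptyset$), some $i \in \tau$ belongs to every maximal codeword containing $\sigma$.

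For the forward direction, assume $C$ is max-intersection-complete and take such a $\phi$. Suppose, toward a contradiction, that no $i \in \tau$ has the required property; then for each $i \in \tau$ there is a facet $F_i$ with $\sigma \subseteq F_i$ and $i \notin F_i$. The set $F := \bigcap_{i \in \tau} F_i$ is a codeword by max-intersection-completeness, and satisfies $\sigma \subseteq F$ and $F \cap \tau = \emptyset$, contradicting $\phi$ being in the neural ideal. For the reverse direction, let $F_1, \dots, F_m$ be maximal codewords with intersection $F$, and suppose $F \notin C$. Then the pseudo-monomial $\phi_F := \prod_{j \in F} x_j \prod_{k \in [n]\setminus F}(1-x_k)$ lies in the neural ideal, so some canonical-form element $\phi$ with index sets $\sigma \subseteq F$ and $\tau \subseteq [n]\setminus F$ divides it. The containment $\sigma \subseteq F \subseteq F_1$ puts $\sigma$ in $\Delta(C)$, so $\prod_{j \in \sigma} x_j$ is not in the neural ideal, forcing $\tau \neq \emptyset$ and hence $\phi$ non-monomial. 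By hypothesis, some $i \in \tau$ lies in every facet containing $\sigma$. Each $F_k$ is such a facet, so $i \in F_k$ for all $k$, whence $i \in F$; this contradicts $i \in \tau \subseteq [n]\setminus F$.

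The main technical step is the identification $\phi \in P_F \Leftrightarrow \sigma \not\subseteq F$, which bridges the algebraic content of condition (i) with the combinatorics of intersecting maximal codewords. Once this dictionary is installed, both directions rest on the same observation: a collection of maximal codewords, each avoiding one index of $\tau$, has intersection---guaranteed to be a codeword by max-intersection-completeness---that witnesses the putative canonical-form element failing to lie in the neural ideal.
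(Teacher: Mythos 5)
Your proof is correct, but it takes a genuinely different route from the paper. You bypass the factor complex altogether: after identifying the associated primes of $I(\Delta(C))$ with the maximal codewords via $P_F=\langle x_j \mid j\notin F\rangle$, and checking that $\phi\in P_F$ iff $\sigma\not\subseteq F$ (and $x_i\in P_F$ iff $i\notin F$), you translate (i)--(ii) into ``some $i\in\tau$ lies in every maximal codeword containing $\sigma$,'' and both implications then follow from two elementary facts about neural ideals: every element of $J_C$ vanishes on every codeword, and every pseudomonomial of $J_C$ (in particular the indicator polynomial of a missing intersection of maximal codewords) is divisible by some element of $\CF(J_C)$; both facts are standard and easily justified, so there is no gap. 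The paper instead proves the expanded Theorem~\ref{thm:max-intersection-complete}: it introduces the factor complex $\fC(C')$, establishes the bijections of Theorems~\ref{thm:relationships-1} and~\ref{thm:relationships-2} (intervals/pseudomonomials/effective faces, and maximal codewords/minimal primes/minimal prime-sets), proves that max-intersection-completeness is equivalent to a facet-level condition on $\fC(C')$, and only then translates that into the algebraic statement. Your dictionary $\phi\in P_F\Leftrightarrow\sigma\not\subseteq F$ plays exactly the role of the paper's observation that $P=\langle x_j\mid j\in B\rangle$ contains $\phi$ iff $B\cap c\neq\emptyset$, and your intersection of the witnesses $F_i$ parallels the paper's argument with the sets $H_F$ and the minimal prime-sets $\overline{B}_v$, so the combinatorial core is the same; what your argument buys is a short, self-contained proof of Theorem~\ref{thm:main-intro} requiring neither the factor complex nor Theorems~\ref{thm:relationships-1}--\ref{thm:relationships-2}, while the paper's longer route buys the factor-complex machinery itself, the intermediate combinatorial criterion (condition (2) of Theorem~\ref{thm:max-intersection-complete}), and a uniform framework that also handles intersection-complete codes and the comparison with polar complexes.
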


We remark that
Theorem~\ref{thm:main-intro} can be turned into an
algorithm to verify whether a code is max-intersection-complete.  This
algorithm's runtime is
sub-exponential in the input size, where the input consists of the
maximal codewords of a code $C$ as well as its canonical form
$\CF(J_C)$. On the other hand, the known algorithms for computing $\CF(J_C)$ are
exponential. More details on the computational aspects of
Theorem~\ref{thm:main-intro} can be found in
Section~\ref{sec:algorithm}, which also includes an infinite 
family of codes for which Theorem~\ref{thm:main-intro} is more
efficient at verifying max-intersection-completeness
than brute-force checking of intersections of maximal codewords (see Proposition~\ref{prop:alg}).


To prove Theorem~\ref{thm:main-intro}, which translates a property of
a code to a property of its neural ideal, we introduce a new
combinatorial object, the \emph{\newname complex} of a code.
This is a simplicial complex that, like the neural ideal but unlike
$\Delta(C)$, captures all the combinatorial information in a code $C$.  
We are therefore able to elucidate the relationships among codes,
their factor complexes, and their related ideals (neural ideals and
Stanley--Reisner ideals) -- and then use these results to characterize
being max-intersection-complete in terms of the factor complex.  
Finally, this combinatorial criterion directly translates into an
algebraic criterion, Theorem~\ref{thm:main-intro} above. 

Along the way, we give a new characterization of {\em intersection-complete} codes -- those codes that are closed under taking intersections of codewords. 
Our characterization is combinatorial, via the factor
complex, in contrast to a prior algebraic characterization through
the neural ideal~\cite{curto2018algebraic}.   
Indeed, we expect in the future that the factor complex may help us understand more properties of neural codes.

Our work fits into the literature on neural codes as follows. 
Like previous works, we are motivated by the question of convexity in
neural codes~\cite{what-makes,just-convex, jeffs2018, jeffs2019sparse, LSW,
  zvi-yan, williams},  
with a specific interest in using neural ideals to study
convexity~\cite{curto2013neural, GB, new-alg, HMO, IKR,  morvant}.
Also, our factor complexes are motivated by the closely related {\em
  polar complexes} introduced recently by G\"unt\"urk\"un et
al.~\cite{gunturkun2017polarization} (see
also~\cite{christensen,IKR}). 

\subsection*{Outline}
This article is organized as follows.
Section~\ref{sec:bkrd} contains background material, and 
Section~\ref{sec:main-results} gives our main results.
In Section~\ref{sec:reln-to-ideals},
we prove relationships among codes, their factor complexes, and their
neural or Stanley-Reisner ideals, and Section~\ref{sec:polar} relates
factor complexes and polar complexes.

\subsection*{Author contributions}
The first author is the main contributor to this article.

\subsection*{Acknowledgements
}
We are grateful to Carina Curto, Luis Garc\'{\i}a-Puente, and Alex Kunin for inspirational
discussions. 
AS was partially supported by NSF grant DMS-1752672.
We thank the referee for helpful suggestions which improved this work.

\section{Background} \label{sec:bkrd}
Throughout this article, $C$ is a neural code on $n$ neurons, that is,
a subset of $2^{[n]}$, where $[n]=\{1,2,\dots,n\}$. 
Elements of $C$
are called \emph{codewords}, and may be represented as subsets of
$[n]$ or as $n$-tuples of zeros and ones, where a $1$ in position $i$
indicates that $i$ belongs to the codeword. 

Given $c \subset d \subset [n]$, the \emph{Boolean
  interval} between $c$ and $d$ is 
\begin{equation*}
\label{eqn:booleanInterval}
[c,d]:=\{ w \in 2^{[n]} \mid c \subset w \subset d \}.
\end{equation*}

The \emph{complement} of a code $C$ on $n$ neurons is the code
\begin{equation}
\label{eqn:complementCode}
C':= 2^{[n]} \smallsetminus C .
\end{equation}

\noindent
{\bf Convention.} In this article, we assume that $\emptyset \subsetneq C \subsetneq 2^{[n]}$, so that 
the neural ideals (defined below) of $C$ and $C'$ have primary decompositions.

\begin{definition}
\label{def:intervalsOfC}
Let $C$ be a code. The \emph{intervals} of $C$ are the Boolean
intervals contained in $C$. The \emph{maximal intervals} of $C$ are
the intervals of $C$ that are maximal with respect to inclusion.
\end{definition}

\begin{example} \label{ex:code}
For the code $C=\{ \emptyset, 2,3, 12, 13 \} = \{000, 010, 001, 110, 101 \}$, the maximal intervals 
are $[\emptyset, 2]$, $[\emptyset, 3]$, $[2,12]$, and $[3,13]$.
\end{example}

\subsection{Neural ideals and the canonical form}
The main reference for
this section is~\cite{curto2013neural}.

We denote by $\bbF_2$ the field with two elements, and let
$R=\bbF_2[x_1,\dots,x_n] = \bbF_2[x]$. A \emph{pseudomonomial} is a polynomial
$\prod_{i\in \sigma}x_i\prod_{j\in \tau}(1-x_j) \in R$, where $\sigma,\tau
\subset [n]$ are disjoint. A \emph{pseudomonomial ideal} is an ideal
generated by pseudomonomials.
If $c \in 2^{[n]}$, the pseudomonomial
\begin{equation}
\label{eqn:pseudoMonomial}
\phi_c:= \prod_{i \in c} x_i \prod_{j \in [n] \smallsetminus c}
    (1-x_j)
\end{equation}
is called the \emph{indicator polynomial} of $c$. 

\begin{definition}
\label{def:neuralIdeal}
The \emph{neural ideal} $J_C$ of a code $C$ is the (pseudomonomial) ideal generated by the
indicator polynomials of its non-codewords; in symbols,
\[
J_C := \langle \phi_c \mid c \in C'\rangle.
\]
\end{definition}
Note that, using the convention that
$n$-tuples of zeros and ones represent codewords, the zero-set of
$J_C$ is $C$. In other words, the code $C$ and its neural ideal 
contain the same information. Moreover, any
ideal generated by pseudomonomials is the neural ideal of a code~\cite[Theorem 2.1]{jeffs2016convexity}.

The neural ideal $J_C$ has a unique irredundant decomposition
\begin{equation}
\label{eqn:primdec}
J_C=\bigcap\limits_{h=1}^g P_h, 
\end{equation}
where each $P_h$ is a
pseudomonomial ideal that is
prime~\cite[Proposition 6.8]{curto2013neural}.
In particular, $J_C$ is a radical ideal. We remark that a
pseudomonomial ideal $P$ is prime if and only if it is of the form
\begin{equation}
\label{eqn:prime}
P = \langle \{x_i \mid i \in \sigma\} \cup \{(1-x_j) \mid j \in
\tau\}\rangle \quad
\text{~for~} \sigma,\tau \text{~disjoint~subsets~of~}  [n].
\end{equation}

\begin{definition}
\label{def:canonicalForm}
Let $J \subset R$ be a pseudomonomial ideal. A
pseudomonomial in $J$ is \emph{minimal} if it is minimal with
respect to divisibility among all pseudomonomials in $J$. The
\emph{canonical form} of $J$ is the set $\CF(J)$ of all minimal
pseudomonomials of $J$.
\end{definition}

The canonical form of a pseudomonomial ideal is a generating set for the ideal~\cite{curto2013neural}.

\begin{example}[Example~\ref{ex:code}, continued] \label{ex:code-2}
The complement of the code $C=\{ \emptyset, 2,3, 12, 13 \}$
is $C'=\{1, 23, 123\}$. Thus, the neural ideal of $C$ is $J_C=\langle x_1(1-x_2)(1-x_3), ~x_2x_3(1-x_1),~ x_1x_2x_3 \rangle$, and the canonical form is $\CF(J_C)=\{x_1(1-x_2)(1-x_3),~  x_2x_3 \}$.
\end{example}

\subsection{Polarization and squarefree monomial ideals}
Let $S = \bbF_2[x_1,\dots,x_n,y_1,\dots,y_n] = \bbF_2[x,y]$.

The idea of using $y_i$ to encode $1-x_i$ is well known (see, for instance,~\cite{jarrah,veliz-cuba}).  In the context of neural ideals, the
following construction was introduced in~\cite{gunturkun2017polarization}.
\begin{definition}
\label{def:polarization}
The \emph{polarization} of a pseudomonomial $\phi=\prod_{i\in \sigma}
x_i \prod_{j\in \tau}(1-x_j) \in R$ is 
\begin{equation*}
\label{eqn:polarizationOfPhi}
\calP(\phi) := \prod_{i\in\sigma} x_i \prod_{j \in \tau}y_j \in S.
\end{equation*}
If $J \subset R$ is a pseudomonomial ideal, the
\emph{polarization} of $J$ is the ideal in $S$ obtained by polarizing the
pseudomonomials in the canonical form of $J$, that is,
\begin{equation*}
\label{eqn:polarizationOfJ}
\calP(J) := \langle \calP(\phi) \mid \phi \in \CF(J) \rangle
\subset S.
\end{equation*}
\end{definition}
Note that the polarization of a pseudomonomial ideal is a
\emph{squarefree} monomial ideal in $S$, that is,
an ideal generated by monomials that are not divisible by the squares
of the variables (so, $\calP(J)$ is radical). We recall the relationship between squarefree
monomial ideals and simplicial complexes.

\begin{definition}
\label{def:StanleyReisner}
Let $\Delta$ be a simplicial complex on $[n]$, and let $\bbk$ be a
field. The \emph{Stanley--Reisner ideal} of $\Delta$ is
\begin{equation*}
\label{eqn:StanleyReisner}
I(\Delta) := \langle \prod_{i\in \sigma} x_i \mid \sigma \notin \Delta
\rangle \subset \bbk[x_1,\dots,x_n].
\end{equation*}
\end{definition}

The ideal $I(\Delta)$ is radical, with prime decomposition
\begin{equation}
\label{eqn:SRDecomp}
I(\Delta) = \bigcap_{\sigma \in \textrm{Facets}(\Delta)} \langle x_i \mid
i \notin \sigma \rangle.
\end{equation}
It follows that $\Delta$ can be recovered from $I(\Delta)$. In
fact,~\eqref{eqn:SRDecomp} can be used to conclude that
any squarefree monomial ideal is the Stanley--Reisner ideal of some
simplicial complex.

\begin{definition}
\label{def:simplicialComplexCode}
The {\em simplicial complex of a code} $C$ is $\Delta(C)$, the
smallest simplicial complex containing $C$. Its Stanley--Reisner ideal 
is denoted by $I(\Delta(C)) \subset R=\bbF_2[x]$. 
\end{definition}

It is a fact that $I(\Delta(C))$ is
generated by the monomials in $\CF(J_C)$~\cite[Lemma 4.4]{curto2013neural}. 

\begin{example}[Example~\ref{ex:code-2}, continued] \label{ex:code-3}
For $C=\{ \emptyset, 2,3, 12, 13 \}$, 
the simplicial complex $\Delta(C)$ has two facets, $12$ and $13$.  The corresponding Stanley--Reisner ideal 
is $I(\Delta(C))= \langle x_2 x_3 \rangle$, which is generated by the unique monomial in the canonical form
$\CF(J_C)=\{x_1(1-x_2)(1-x_3),~  x_2x_3 \}$.
\end{example}

In this article, we work with squarefree monomial ideals in
$S=\bbF_2[x,y]$ that arise from polarization. 
In order to construct their corresponding simplicial complexes, we use
$\{1,\dots,n,\overline{1},\dots,\overline{n}\}$ as a vertex set, with
the understanding that $x_i$ corresponds to $i$, and $y_i$ corresponds
to $\overline{i}$. If $B \subset [n]$, we denote $\overline{B} = \{
\overline{i} \mid i \in B\}$. In particular,
\[
\overline{[n]}=\{\overline{1},\dots,\overline{n}\} \quad \textrm{and} \quad
[n]\cup\overline{[n]} = \{1,\dots, n,\overline{1},\dots,\overline{n}\}.
\]
We always use overline notation to denote subsets of $\overline{[n]}$;
this is justified, as any subset of $\overline{[n]}$ is of the form $\overline{B}$ for
some $B \subset [n]$.

\begin{remark}
As noted above, the ideals that are associated to codes (the neural ideal
$J_C$, the ideal $I(\Delta(C))$, and later the factor ideal $\fI(C)$)
are \emph{radical ideals}, that is, they can be expressed as intersections of prime ideals. 
We emphasize that the sets of associated
primes, minimal primes, and primary components of a radical ideal all coincide.
\end{remark}

\section{Main results} \label{sec:main-results}

In this section we introduce a new combinatorial tool to study neural
codes: the factor complex
(Definition~\ref{def:new-cpx}), and state our four main results.  
Theorems~\ref{thm:relationships-1}
and~\ref{thm:relationships-2} summarize the relationships among codes,
their factor complexes, and their related ideals (neural ideals and
Stanley--Reisner ideals). 
These results are used to prove Theorems~\ref{thm:intersection-complete}
and~\ref{thm:max-intersection-complete}, 
which characterize intersection-complete codes and max-intersection-complete codes in two ways: combinatorially and algebraically.

\begin{definition} 
\label{def:new-cpx}
Let $C$ be a code on $n$ neurons, and recall the primary
decomposition of the neural ideal $J_C$ given in~\eqref{eqn:primdec}.
The \emph{\newname ideal} of $C$
is obtained by polarizing the components 
of $J_C$, namely,
\[
\fI(C) := \bigcap\limits_{h=1}^g \calP(P_h).
\]
The \emph{\newname complex} $\fC(C)$ of $C$ is 
the 
simplicial complex on $[n]\cup\overline{[n]}$
whose Stanley--Reisner ideal is $\fI(C)$.
A face of $\fC(C)$ is \emph{defective} if it contains neither $i$ nor
$\overline{i}$ for some $i\in [n]$ (we think of $i$ as
a defect, or flaw); faces that are not defective are called \emph{effective}.
We say that $\overline{B} \subset \overline{[n]}$ is a {\em prime-set} of
$\fC(C)$  if $[n]\cup \overline{B} \notin \fC(C)$,
and $\overline{B}$ is furthermore {\em minimal} if 
$\overline{B}$ is minimal with respect to inclusion among 
prime-sets. Lemma~\ref{lemma:prime-set} gives the reason why we chose
this terminology.
\end{definition}

\begin{example}[Example~\ref{ex:code-3}, continued] \label{ex:code-4}
For $C' = \{1,23,123 \}$,
the neural ideal decomposes as follows:
$$
J_{C'} = \langle (1-x_1)(1-x_3),~ (1-x_1)(1-x_2),~x_2(1-x_3),~x_3(1-x_2)\rangle
=
\langle x_2, x_3, ~1-x_1 \rangle \cap \langle 1-x_2,~ 1-x_3 \rangle.
$$
The factor ideal is therefore
$$
FI(C') = 
\langle x_2, x_3, y_1\rangle \cap \langle y_2, y_3 \rangle,$$
and so the two facets of the factor complex 
$\Delta_{\cap}(C')$ are
 $1\bar{2}\bar{3}$ and $123\bar{1}$ (both are effective). The minimal prime-sets of 
 $\Delta_{\cap}(C')$ are
$\{\bar{2}\}$ and $\{\bar{3}\}$.
 \end{example}

\begin{theorem}[Codes, factor complexes, and neural ideals] \label{thm:relationships-1}
Let $C$ be a code on $n$ neurons, and $C'$ its complement code defined
in~\eqref{eqn:complementCode}.
The following two maps are bijections:
\begin{align*}
\begin{array}{ccccc}
    \{\textrm{pseudomonomials~in~}J_{C'} \}
        & \leftarrow &
    \{\textrm{intervals~in~}C\}
     &\to & 
    \{\textrm{effective~faces~of~}\fC(C) \}
    \\
        \prod_{i \in c} x_i \prod_{j \in [n] \smallsetminus d} (1-x_j)
        & \leftmapsto &
    [c,d]
     &\mapsto & 
    d \cup \overline{[n] \smallsetminus c}\\
\end{array}
\end{align*}
Moreover, every facet of $\fC(C)$ is effective, and the
following are equivalent:
\begin{enumerate}[(1)]
\item $[c,d]$ is a maximal interval in $C$, 
\item $\prod_{i \in c} x_i \prod_{j \in [n] \smallsetminus d} (1-x_j)
  \in \CF(J_{C'})$, and
\item $d \cup \overline{[n]
  \smallsetminus c}$ is a facet of $\fC(C)$.  
\end{enumerate}
\end{theorem}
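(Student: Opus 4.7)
My plan is to use the prime decomposition $J_C = \bigcap_{h=1}^g P_h$ from \eqref{eqn:primdec} as the bridge among all three sets. By \eqref{eqn:prime}, each component has the form $P_h = \langle x_i \mid i \in \sigma_h \rangle + \langle 1-x_j \mid j \in \tau_h\rangle$ for disjoint $\sigma_h, \tau_h \subset [n]$, and its zero set in $2^{[n]}$ is the Boolean interval $[\tau_h, [n] \smallsetminus \sigma_h]$. Since the zero set of $J_C$ equals $C$ and the decomposition is irredundant, I would first establish, as a preliminary lemma, that the maximal intervals of $C$ are exactly $[c_h, d_h] := [\tau_h, [n]\smallsetminus \sigma_h]$ for $h = 1, \ldots, g$.

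For the left bijection, every pseudomonomial in $R$ has a unique expression $\phi = \prod_{i \in c} x_i \prod_{j \in [n]\smallsetminus d}(1-x_j)$ with $c \subset d$ in $[n]$. Evaluating at $w \in 2^{[n]}$ gives $\phi(w) = 1$ exactly when $w \in [c,d]$, so, since $J_{C'}$ is radical with zero set $C'$, one has $\phi \in J_{C'}$ iff $\phi$ vanishes on $C'$ iff $[c,d] \cap C' = \emptyset$ iff $[c,d] \subset C$. For the right bijection, polarization yields $\fI(C) = \bigcap_h \calP(P_h)$ with $\calP(P_h) = \langle x_i \mid i \in [n] \smallsetminus d_h\rangle + \langle y_j \mid j \in c_h\rangle$. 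By \eqref{eqn:SRDecomp}, the facets of $\fC(C)$ are the complements in $[n] \cup \overline{[n]}$ of these variable sets, namely $F_h := d_h \cup \overline{[n]\smallsetminus c_h}$. Since $c_h \subset d_h$ gives $d_h \cup ([n]\smallsetminus c_h) = [n]$, every $F_h$ is effective, which settles the ``moreover'' clause. Every face of $\fC(C)$ lies in some $F_h$, so an effective face $A \cup \overline{B}$ (with $A \cup B = [n]$) satisfies $A \subset d_h$ and $B \subset [n]\smallsetminus c_h$ for some $h$; setting $c := [n]\smallsetminus B$ and $d := A$ yields an interval $[c,d] \subset [c_h, d_h] \subset C$ whose image is $A \cup \overline{B}$. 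Conversely, an interval $[c,d]$ in $C$ lies in some maximal $[c_h, d_h]$, whence $d \cup \overline{[n]\smallsetminus c} \subset F_h$ is an effective face.

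For the final equivalences, I note that both bijections are monotone: $[c,d] \subset [c', d']$ if and only if the pseudomonomial associated to $[c', d']$ divides the one associated to $[c,d]$, if and only if $d \cup \overline{[n]\smallsetminus c} \subset d' \cup \overline{[n]\smallsetminus c'}$. Hence maximal intervals correspond to pseudomonomials minimal under divisibility, i.e., elements of $\CF(J_{C'})$, and to containment-maximal effective faces; since every facet is effective and every face lies in a facet, these maximal effective faces are exactly the facets $F_h$. I expect the main obstacle to be the preliminary identification of primes of $J_C$ with maximal intervals of $C$, which ultimately rests on the uniqueness and irredundancy of the prime decomposition of the radical ideal $J_C$.
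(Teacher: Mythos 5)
Your argument is correct in substance, but it is organized around a different pivot than the paper's proof. You make the irredundant decomposition \eqref{eqn:primdec} the engine: you first identify the components $P_h$ (equivalently, the minimal primes of $J_C$) with the maximal intervals of $C$, then read the facets of $\fC(C)$ directly off the polarized components via \eqref{eqn:SRDecomp}, and deduce everything else from the monotonicity of the two maps. The paper never states the prime--interval dictionary explicitly; instead it proves transfer lemmas -- Lemma~\ref{lemma:factorIdealPseudomonomials} (membership in $J_C$ is preserved under polarization), Lemma~\ref{codewordfacelem}, and the Interval--Face Correspondence Lemma~\ref{IFCorr}, whose forward direction leans on $\CF(J_C)$ being a generating set -- and simply cites \cite[Lemma 5.7]{curto2013neural} for the map $\alpha$; the effectiveness of facets (Lemma~\ref{propFulRep}) is the observation that an associated prime of $J_C$ cannot contain both $x_\ell$ and $1-x_\ell$, which is your ``$c_h\subset d_h$'' remark in different clothing. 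Your route buys a clean structural statement (minimal primes of $J_C$ correspond order-reversingly to maximal intervals of $C$) from which all three equivalences fall out at once; the paper's route buys lemmas that are reused later (Lemma~\ref{IFCorr} reappears in the proof of Proposition~\ref{prop:factor-polar}) and avoids having to prove that dictionary.

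Two steps you defer need more than the one-line justifications you give, though both are fixable inside your own framework. First, ``$\phi\in J_{C'}$ iff $\phi$ vanishes on $C'$'' does not follow merely from ``$J_{C'}$ is radical with zero set $C'$'': the zero set here means the $\{0,1\}$-points, and since the neural ideal does not contain the Boolean relations $x_i^2-x_i$, vanishing on $\{0,1\}$-points does not in general imply membership (e.g.\ $x_1(1-x_1)$ vanishes at every such point). The statement is true for pseudomonomials, but it requires either the citation the paper uses, \cite[Lemma 5.7]{curto2013neural}, or a direct argument: a pseudomonomial lies in a pseudomonomial prime of the form \eqref{eqn:prime} iff one of the prime's generators divides it, iff it vanishes on the prime's $\{0,1\}$-zero interval, and intersecting over the components of $J_{C'}$ gives the claim. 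Second, the same generator check, applied to the generators $\phi_c$ of $J_C$, is what actually proves your preliminary lemma: a pseudomonomial prime contains $J_C$ iff its $\{0,1\}$-zero interval is contained in $C$, inclusion of such primes reverses inclusion of intervals, and the components of an irredundant prime intersection are exactly the minimal primes (and, polarizing incomparable primes yields incomparable primes, so \eqref{eqn:SRDecomp} really does give your $F_h$ as the facets). With those details written out, your plan goes through.
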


\begin{theorem}[Codes, factor complexes, and Stanley--Reisner ideals] \label{thm:relationships-2}
Let $C$ be a code on $n$ neurons, with complement code $C'$ and factor
complex $\fC(C)$.  The following two maps are bijections: 
\begin{align*}
\begin{array}{ccccc}
    \{\textrm{minimal~primes~of~}I(\Delta(C))\}
        & \leftarrow &
    \{\textrm{maximal~codewords~of~}C \}
     &\to & 
    \left \{ \begin{array}{c}\textrm{minimal prime-sets} \\ \textrm{of
               } \fC(C') \end{array} \right\}
    \\
       \langle x_i \mid i \in [n] \smallsetminus M \rangle 
        & \leftmapsto &
    M
     &\mapsto & 
     \overline{[n] \smallsetminus M}
    \\
\end{array}
\end{align*}
\end{theorem}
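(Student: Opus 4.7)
The first bijection is essentially immediate from the Stanley--Reisner primary decomposition~\eqref{eqn:SRDecomp}. Observe that the facets of $\Delta(C)$ are precisely the maximal codewords of $C$: every face of $\Delta(C)$ is a subset of some codeword, so facets are codewords maximal under inclusion. Applying~\eqref{eqn:SRDecomp} to $\Delta(C)$ then yields the bijection $M \mapsto \langle x_i \mid i \in [n] \smallsetminus M\rangle$.

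For the second bijection, the plan is to translate the defining condition of a minimal prime-set of $\fC(C')$ into a hitting-set problem, and then invoke the standard duality between minimal non-faces and complements of facets. Write the irredundant primary decomposition $J_{C'} = \bigcap_h P_h$ with $P_h = \langle x_i : i \in \sigma_h\rangle + \langle 1-x_j : j \in \tau_h\rangle$, so that each $V(P_h) = [\tau_h,\, [n]\smallsetminus\sigma_h]$ is a maximal Boolean interval of $C'$. Polarizing gives $\fI(C') = \bigcap_h \bigl(\langle x_i : i \in \sigma_h\rangle + \langle y_j : j \in \tau_h\rangle\bigr)$. A direct monomial computation shows that $\prod_{i \in [n]} x_i \prod_{j \in B} y_j$ lies in $\fI(C')$---equivalently, $\overline{B}$ is a prime-set of $\fC(C')$---if and only if $B \cap \tau_h \neq \emptyset$ for every $h$ with $\sigma_h = \emptyset$. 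Consequently, minimal prime-sets of $\fC(C')$ correspond bijectively to minimal hitting sets of the family $\{\tau_h : \sigma_h = \emptyset\}$.

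Next, I would identify $\{\tau_h : \sigma_h = \emptyset\}$ with the collection of minimal non-faces of $\Delta(C)$: the condition $\sigma_h = \emptyset$ means $V(P_h)$ is an upward interval $[\tau_h,\, [n]]$, which sits inside $C'$ precisely when every superset of $\tau_h$ fails to be a codeword, i.e., $\tau_h \notin \Delta(C)$; maximality of the Boolean interval corresponds exactly to minimality of $\tau_h$ as a non-face. A standard duality argument then shows that minimal hitting sets of the minimal non-faces of $\Delta(C)$ are exactly the complements of facets of $\Delta(C)$: if $M$ is a facet, then $[n]\smallsetminus M$ meets every minimal non-face (else one would sit inside the face $M$) and is minimal because removing any element leaves the complement of a face strictly containing $M$; conversely, any minimal hitting set $B$ has $[n]\smallsetminus B$ containing no minimal non-face and hence a face, which must be a facet by minimality of $B$. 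Combining with the facet-maximal codeword identification from the first bijection gives the map $M \mapsto \overline{[n] \smallsetminus M}$.

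The main technical point is the identification of the components $P_h$ with $\sigma_h = \emptyset$ with minimal non-faces of $\Delta(C)$; this requires carefully tracking how maximality of an upward Boolean interval $[\tau,\, [n]] \subseteq C'$ translates into minimality of $\tau$ as a non-face of $\Delta(C)$, using the down-closedness of $\Delta(C)$. Once this correspondence is in hand, the remaining hitting-set duality is routine.
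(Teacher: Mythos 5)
Your argument is correct, and it reaches the same combinatorial core as the paper, but dressed differently. The paper's route is: Lemma~\ref{lemma:prime-set} shows that $\overline{B}$ is a prime-set of $\fC(C')$ if and only if $\langle x_i \mid i \in B\rangle \supseteq I(\Delta(C))$, which is deduced from Theorem~\ref{thm:relationships-1} (the facets of $\fC(C')$ containing $[n]$ correspond to the monomials of $\CF(J_C)$) together with the fact that those monomials generate $I(\Delta(C))$; the bijection with maximal codewords then comes for free from the Stanley--Reisner decomposition~\eqref{eqn:SRDecomp}. You instead test membership of $\prod_{i\in[n]}x_i\prod_{j\in B}y_j$ directly in the defining decomposition $\fI(C')=\bigcap_h\calP(P_h)$, identify the components with $\sigma_h=\varnothing$ with the minimal non-faces of $\Delta(C)$, and prove by hand the transversal duality (minimal hitting sets of the minimal non-faces are exactly complements of facets), which is precisely the combinatorial content of~\eqref{eqn:SRDecomp}. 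In substance your hitting-set condition is the paper's condition that $\langle x_i\mid i\in B\rangle$ contain every monomial generator of $I(\Delta(C))$; what your version buys is a self-contained combinatorial argument that bypasses Lemma~\ref{lemma:prime-set}, at the cost of re-proving standard Stanley--Reisner duality and of leaning on the structure of the primary decomposition of $J_{C'}$ rather than on Theorem~\ref{thm:relationships-1}.

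One asserted step carries real weight and should be justified: that the irredundant components $P_h$ of $J_{C'}$ in~\eqref{eqn:primdec} correspond to \emph{exactly} the maximal Boolean intervals of $C'$, in both directions. You need surjectivity as well: if some maximal interval $[\tau,[n]]\subset C'$ (equivalently, some minimal non-face $\tau$ of $\Delta(C)$) failed to appear among the components with $\sigma_h=\varnothing$, your hitting family would be too small and its minimal transversals need not be the minimal prime-sets. This is true and quick to supply: either cite Theorem~\ref{thm:relationships-1} applied to $C'$ (its facet statement identifies the components $\calP(P_h)$, i.e.\ the facets of $\fC(C')$, with the maximal intervals of $C'$), or argue directly that a pseudomonomial prime contains $J_{C'}$ if and only if its zero set is a Boolean interval contained in $C'$, so the minimal primes, which make up the irredundant decomposition of the radical ideal $J_{C'}$, are exactly the maximal intervals. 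Finally, your justification of minimality of $[n]\smallsetminus M$ is garbled as written; the intended argument (if $([n]\smallsetminus M)\smallsetminus\{b\}$ were still a transversal, then $M\cup\{b\}$ would contain no minimal non-face, hence be a face strictly containing the facet $M$, a contradiction) is clear and correct.
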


The proofs of Theorems~\ref{thm:relationships-1}
and~\ref{thm:relationships-2} are postponed until Sections~\ref{sec:reln-to-neural-ideals}
and~\ref{sec:reln-to-SR-ideals}, respectively.

\begin{example}[Example~\ref{ex:code-4}, continued] \label{ex:code-5}
According to 
Theorem~\ref{thm:relationships-1}, 
the facets $1\bar{2}\bar{3}$ and $123\bar{1}$ 
of 
 $\Delta_{\cap}(C')$ 
correspond to the two
maximal intervals of $C'$, 
$[1,1]$ and $[23,123]$, respectively,
and also 
to the two pseudomonomials in  
$\CF(J_C)$, 
namely, $x_1(1-x_2)(1-x_3)$ and $x_2x_3$, respectively.
 
Similarly, Theorem~\ref{thm:relationships-2}
implies that the minimal prime-sets
 $\{\bar{2}\}$ and $\{\bar{3}\}$ 
of 
 $\Delta_{\cap}(C')$ 
correspond to the minimal primes 
$\langle x_2\rangle$ and $\langle x_3 \rangle$
of $I(\Delta(C))= \langle x_2 x_3 \rangle$ 
and also to the maximal codewords $13$ and $12$ of $C$.
\end{example}

The following result translates the algebraic characterization of
intersection-complete codes from~\cite{curto2018algebraic} into a
new 
combinatorial criterion.
\begin{theorem}[Intersection-complete codes]
\label{thm:intersection-complete}
Let $C$ be a code on $n$ neurons with neural ideal $J_C$, and let $C'$ be the complement code of $C$
with \newname complex 
$\fC(C')$. The following are equivalent:
\begin{enumerate}[(1)]
    \item $C$ is intersection-complete, \label{ic1}
    \item every pseudomonomial $\prod\limits_{i\in\sigma}
x_i\prod\limits_{j\in\tau}(1-x_j) $ in $ \CF(J_C)$  \label{ic2}
satisfies $|\tau|\leq 1$, and
    \item every facet $F$ of $\fC(C')$ satisfies $\left|F \cap [n]
      \right| \geq n-1$. \label{ic3}
\end{enumerate}
\end{theorem}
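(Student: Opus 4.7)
The plan is to derive $(1)\Leftrightarrow(2)$ from the literature and to prove $(2)\Leftrightarrow(3)$ as a direct consequence of Theorem~\ref{thm:relationships-1}.

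For $(1)\Leftrightarrow(2)$, I would cite the algebraic characterization of intersection-complete codes from \cite{curto2018algebraic}: $C$ is intersection-complete if and only if every pseudomonomial in $\CF(J_C)$ involves at most one factor of the form $(1-x_j)$. Translated into the notation of the statement, this is exactly the condition $|\tau|\leq 1$.

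For $(2)\Leftrightarrow(3)$, I would apply Theorem~\ref{thm:relationships-1} with $C$ replaced by $C'$. Since $(C')'=C$, that result provides a three-way bijection among the maximal intervals of $C'$, the pseudomonomials in $\CF(J_C)$, and the facets of $\fC(C')$. Under this correspondence, a pseudomonomial $\phi=\prod_{i\in\sigma}x_i\prod_{j\in\tau}(1-x_j)\in \CF(J_C)$ arises from the maximal interval $[\sigma,\,[n]\setminus\tau]$ of $C'$ and matches the facet
\[ F \;=\; ([n]\setminus\tau)\,\cup\,\overline{[n]\setminus\sigma} \]
of $\fC(C')$. The key observation is that $F\cap[n]=[n]\setminus\tau$, so $|F\cap[n]|=n-|\tau|$; hence $|\tau|\leq 1$ holds on the algebraic side if and only if $|F\cap[n]|\geq n-1$ holds on the combinatorial side. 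Because the correspondence is a bijection that enumerates every facet of $\fC(C')$ and every element of $\CF(J_C)$, the universal quantifiers in $(2)$ and $(3)$ match, establishing their equivalence.

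The main \emph{obstacle} here is not really an obstacle, since all the heavy lifting is done by Theorem~\ref{thm:relationships-1}. What remains is careful bookkeeping: identifying the parameters $(\sigma,\tau)$ of the pseudomonomial with the parameters $(c,d)$ of Theorem~\ref{thm:relationships-1} via $c=\sigma$ and $d=[n]\setminus\tau$, and checking the identity $F\cap[n]=[n]\setminus\tau$ that converts $|\tau|$ into $|F\cap[n]|$. These are routine once the relevant formulas are written out, and I would include them explicitly to make the translation unambiguous.
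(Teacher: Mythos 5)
Your proposal is correct and follows essentially the same route as the paper: cite the algebraic characterization of intersection-completeness from~\cite{curto2018algebraic} for \emph{(1)}$\Leftrightarrow$\emph{(2)}, then invoke Theorem~\ref{thm:relationships-1} (applied with $C$ replaced by $C'$) to match each $\phi=\prod_{i\in\sigma}x_i\prod_{j\in\tau}(1-x_j)\in\CF(J_C)$ with the facet $F=([n]\smallsetminus\tau)\cup\overline{[n]\smallsetminus\sigma}$ of $\fC(C')$, so that $|F\cap[n]|=n-|\tau|$ converts \emph{(2)} into \emph{(3)}. Your explicit bookkeeping of $c=\sigma$, $d=[n]\smallsetminus\tau$ is exactly the translation the paper performs implicitly.
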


\begin{proof}
The equivalence between~\emph{\ref{ic1}} and~\emph{\ref{ic2}} is~\cite[Theorem 1.9]{curto2018algebraic}. 
By Theorem~\ref{thm:relationships-1}, 
 $\prod\limits_{i\in\sigma} x_i\prod\limits_{j\in\tau}(1-x_j)$ belongs
 to the canonical form
 of $J_C$ 
if and only if $F=[n] \smallsetminus \tau \cup \overline{[n]
  \smallsetminus \sigma}$ is a facet of $\fC(C')$. 
Thus, the condition $|\tau|\leq 1$ is equivalent to $\left|F \cap [n]
\right| \geq n-1$, and so~\emph{\ref{ic2}} is equivalent to~\emph{\ref{ic3}}. 
\end{proof}

The following result is an expanded version of
Theorem~\ref{thm:main-intro}.

\begin{theorem}[Max-intersection-complete codes]\label{thm:max-intersection-complete}
Let $C$ be a code on $n$ neurons with neural ideal $J_C$, and let $C'$ be the complement code of $C$
with \newname complex
$\fC(C')$. The following are equivalent:
\begin{enumerate}[(1)]
    \item $C$ is max-intersection-complete, \label{mic1}
    \item  \label{mic2} for every facet $F$ of $\fC(C')$ that does \underline{not}
      contain $[n]$, there exists $i \in [n]$ such that  
\begin{enumerate}[(i)]
        \item every minimal prime-set of $\fC(C')$ that contains
          $\overline{i}$ also contains some $\overline{j}$ such that
          $\overline{j} \notin F$, and \label{mic2ii}
        \item  $i \notin F$,    \label{mic2i}
\end{enumerate} 
    \item for every $\phi\in \CF(J_C)$ that is \underline{not} a monomial, there exists $i\in [n]$ such that 
\begin{enumerate}[(i)]
    \item every minimal prime of $I(\Delta(C))$ that contains
      $x_i$ also contains $\phi$, and \label{mic3ii}
    \item $(1-x_i)\mid \phi$. \label{mic3i}
\end{enumerate} \label{mic3}
\end{enumerate}
\end{theorem}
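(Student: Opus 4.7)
The plan is to prove $(2) \Leftrightarrow (3)$ directly from the bijections of Theorems~\ref{thm:relationships-1} and~\ref{thm:relationships-2}, and then to prove $(1) \Leftrightarrow (2)$ by translating condition (2) into a statement about maximal intervals of $C'$ and maximal codewords of $C$, where the max-intersection-complete property has a natural geometric meaning in terms of when an intersection of maximal codewords can fall inside a maximal interval of $C'$.

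For $(2) \Leftrightarrow (3)$, I would translate each piece through the two dictionaries. By Theorem~\ref{thm:relationships-1} applied to $C'$, a non-monomial $\phi = \prod_{i\in\sigma} x_i \prod_{j\in\tau}(1-x_j) \in \CF(J_C)$ corresponds to the maximal interval $[c',d'] = [\sigma,\,[n]\smallsetminus\tau]$ of $C'$ and to the facet $F = d' \cup \overline{[n]\smallsetminus c'}$ of $\fC(C')$; $\phi$ being non-monomial is precisely the condition $\tau \neq \emptyset$, equivalently $[n] \not\subseteq F$, and $(1-x_i)\mid\phi$ translates to $i \notin F$. By Theorem~\ref{thm:relationships-2} and~\eqref{eqn:SRDecomp}, the minimal primes of $I(\Delta(C))$ are $\langle x_k : k \notin M\rangle$ for maximal codewords $M$ of $C$, corresponding to minimal prime-sets $\overline{[n]\smallsetminus M}$ of $\fC(C')$. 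Such a prime contains $x_i$ iff the prime-set contains $\overline{i}$; and it contains $\phi$ iff some $j \in \sigma = c'$ satisfies $j \notin M$, equivalently, iff the prime-set contains some $\overline{j} \notin F$. This matches the clauses of (2) and (3) on the nose.

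For $(1) \Leftrightarrow (2)$, I rephrase (2) on the code side via Theorem~\ref{thm:relationships-1}: for every maximal interval $[c',d']$ of $C'$ with $d' \neq [n]$, there exists $i \in [n]\smallsetminus d'$ such that every maximal codeword $M$ of $C$ with $i \notin M$ has some $j \in c' \smallsetminus M$. For the contrapositive of $(1) \Rightarrow (2)$, if (2) fails at some $[c',d']$, then for each $i \in [n]\smallsetminus d'$ one can pick a maximal codeword $M_i$ of $C$ with $i \notin M_i$ and $c' \subseteq M_i$; the collection $\calM := \{M_i\}$ then satisfies $c' \subseteq \bigcap \calM \subseteq d'$, placing $\bigcap \calM$ in $[c',d'] \subseteq C'$ and contradicting max-intersection-completeness. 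Conversely, if maximal codewords $M_1,\dots,M_k$ have $W := \bigcap_\ell M_\ell \in C'$, then $W$ lies in some maximal interval $[c',d']$ of $C'$; one checks $d' \neq [n]$ (otherwise $[c',[n]] \subseteq C'$ would force $c'$ to lie in no codeword of $C$, contradicting $c' \subseteq W \subseteq M_1 \in C$), the element $i$ from (2) is omitted by some $M_\ell$ since $i \notin d' \supseteq W$, and then (2) yields $j \in c' \smallsetminus M_\ell$, contradicting $c' \subseteq W \subseteq M_\ell$.

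The main subtlety is the careful handling of facets containing $[n]$, which condition (2) explicitly excludes: these correspond to maximal intervals of the form $[c',[n]]$ of $C'$, and their existence signals that no codeword of $C$ contains $c'$, so no intersection of maximal codewords can lie in such an interval. Isolating this degenerate case is the only real obstacle; once it is set aside, the remainder of the argument is a clean application of the bijections from Theorems~\ref{thm:relationships-1} and~\ref{thm:relationships-2}, together with the elementary observation that an element lies in $C'$ if and only if it lies in some maximal interval of $C'$.
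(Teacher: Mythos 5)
Your proof is correct, and its backbone is the same as the paper's: the equivalence \emph{(2)}$\Leftrightarrow$\emph{(3)} is obtained exactly as in the paper, by translating each clause through Theorems~\ref{thm:relationships-1} and~\ref{thm:relationships-2}. Where you differ is in \emph{(1)}$\Leftrightarrow$\emph{(2)}: the paper stays on the complex side, reformulating \emph{(2)} as the condition~($\star$) that $[n]\smallsetminus\bigcup_{v\in H_F}B_v\not\subset F$ (with $H_F$ indexing the minimal prime-sets contained in $F$), proving one direction by intersecting \emph{all} maximal codewords containing $c'$ and the other by producing the face $c\cup\overline{[n]\smallsetminus c}$ from an offending intersection; you instead push condition \emph{(2)} entirely to the code side (maximal intervals $[c',d']$ of $C'$ versus maximal codewords of $C$) and, in the contrapositive of \emph{(1)}$\Rightarrow$\emph{(2)}, choose one witness maximal codeword $M_i\supseteq c'$ with $i\notin M_i$ for each $i\in[n]\smallsetminus d'$, intersecting only those. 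The two arguments encode the same combinatorics, but yours is slightly more elementary in that step, and your per-neuron selection automatically makes the intersected family nonempty (since $d'\neq[n]$), whereas the paper's intersection over $H_F$ tacitly includes the case $H_F=\varnothing$ (harmless, since then $c=[n]\not\subset F$ trivially, but left implicit). Your isolation of the degenerate maximal intervals $[c',[n]]$ of $C'$ -- the facets containing $[n]$, excluded by hypothesis in \emph{(2)} -- is also handled correctly and is exactly what is needed for the \emph{(2)}$\Rightarrow$\emph{(1)} direction.
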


\begin{proof}
We begin by proving~\emph{\ref{mic2}$\Leftrightarrow$\ref{mic3}}.
By Theorem~\ref{thm:relationships-1}, 
$\phi = \prod_{i \in c} x_i \prod_{j \in [n] \smallsetminus d} (1-x_j)
\in \CF(J_C)$ if and only if $F=d \cup \overline{[n]\smallsetminus c}$
is a facet of $\fC(C')$.  
Furthermore, $\phi$ is a non-monomial exactly when $d \not\supseteq [n]$,
if and only if $F$ does not contain $[n]$. 
Thus, by inspection of $\phi$ and $F$, \emph{\ref{mic2}\ref{mic2i}} is equivalent
to~\emph{\ref{mic3}\ref{mic3i}}, and so we need only show
\emph{\ref{mic2}\ref{mic2ii}$\Leftrightarrow$\ref{mic3}\ref{mic3ii}}.

By Theorem~\ref{thm:relationships-2}, the prime ideal
$P=\langle x_j \mid j \in B \rangle$ is associated to 
$I(\Delta(C))$
if and only if
$\overline{B}$ is a minimal prime-set of $\fC(C')$. 
Thus, $x_i \in P$ exactly when $\overline{i} \in \overline{B}$.  Next, 
it is straightforward to check that $P$ contains $\phi = \prod_{i \in c} x_i \prod_{j \in [n] \smallsetminus d} (1-x_j)$ if and only if $B \cap c \neq \emptyset$. 
As $\phi$ corresponds to the facet $F=d \cup
\overline{[n]\smallsetminus c}$ of $\fC(C')$,
it follows that $P$ contains $\phi$ if and only if $\overline{j}
\notin F$ for some $\overline{j} \in \overline{B}$.  This concludes
the proof of~\emph{\ref{mic2}$\Leftrightarrow$\ref{mic3}}.

We set up notation needed to prove~\emph{\ref{mic1}$\Leftrightarrow$\ref{mic2}}. 
Let $\overline{B}_1, \overline{B}_2,\dots,\overline{B}_u$ be the
minimal prime-sets of $\fC(C')$.
By Theorem~\ref{thm:relationships-2}, the maximal codewords of $C$ are
$m_1 = [n]\smallsetminus B_1,\dots,m_u = [n]\smallsetminus B_u $.

We claim that~\emph{\ref{mic2}} is equivalent to the following:
\emph{
\begin{enumerate}[(2')]
    \item \label{mic2p}
    for every facet $F$ of $\fC(C')$ that does not contain $[n]$, 
    \begin{align} \tag{$\star$}\label{eq:max-intersection-complete}
            ([n]\smallsetminus \bigcup\limits_{v\in H_F} B_{v})\not\subset F,
    \end{align}
    where 
    $$H_{F} :=\{v\in [u]\ |\ \overline{B}_v\subset F \}.$$
\end{enumerate}    
}
Indeed, condition~\eqref{eq:max-intersection-complete} 
states that there exists $i \in [n]$ such that 
$i\notin F$ and $\overline{i}$ is \underline{not} in any minimal prime-set $\overline{B}_v
\subset \{\overline{1}, \overline{2}, \dots, \overline{n}\}$ for
which $\overline{B}_v \subset F$.  This latter condition
exactly matches~\emph{\ref{mic2}\ref{mic2ii}}.  Hence, our claim holds, and we may complete
this proof by showing~\emph{\ref{mic1}$\Leftrightarrow$\ref{mic2p}}.

$(\Leftarrow)$ We prove the contrapositive. Suppose 
that the intersection of maximal codewords
$c = \bigcap\limits_{v\in V} m_{v}$ (for some $\varnothing \neq V \subset [u]$)
is not in $C$, that is, $c \in C'$. 
By Theorem~\ref{thm:relationships-1},
$c \cup \overline{[n]\smallsetminus c}$ is a face of $\fC(C')$. Note that 
\begin{equation}
\overline{[n]\smallsetminus c} = \overline{[n]\smallsetminus
  \bigcap\limits_{v\in V} m_v} = \bigcup\limits_{v\in
  V}\overline{[n]\smallsetminus m_v} = \bigcup\limits_{v\in V}
\overline{B}_v.\label{eq:1} 
\end{equation} 
Let $F$ be a facet of $\fC(C')$ containing $c \cup
\overline{[n]\smallsetminus c}$. It follows from~\eqref{eq:1} that $F$
contains the union of minimal prime-sets $\bigcup\limits_{v\in V}
\overline{B}_v$, which implies that $F$ does not contain
$[n]$ (as, otherwise, each $\overline{B}_v \cup [n]$ is contained in $F$ and hence 
is a face of $\fC(C')$, contradicting the fact that $B_v$ is a prime-set).
Since $F \supset \overline{[n]\smallsetminus c} =
\bigcup\limits_{v\in V} \overline{B}_v$, we have that 
$V \subset  H_F$. 
Therefore, $[n]\smallsetminus \bigcup\limits_{v\in H_F}B_v \subset
[n]\smallsetminus \bigcup\limits_{v\in V} B_v=c$, where the equality
comes from~\eqref{eq:1}.
We conclude that $F$ is a facet of $\fC(C')$ not containing $[n]$ such that
$([n]\smallsetminus \bigcup\limits_{v\in H_F} B_v)\subset c
\subset ( c \cup \overline{[n]\smallsetminus c}) \subset F$.  

$(\Rightarrow)$ Suppose $C$ is max-intersection-complete. Let $F$ be a facet of $\fC(C')$ that does not contain $[n]$. Set $c:= [n]\smallsetminus \bigcup\limits_{v\in H_F}{B_v}$. Our goal is to show that $c\not\subset F$.

We accomplish this by proving two facts. First, that $c\cup
\overline{[n]\smallsetminus c}$ is not a face of $\fC(C')$, and
second, that $\overline{[n]\smallsetminus c} =  \bigcup\limits_{v\in
  H_F}\overline{B}_v$. 
The first fact implies that $c\cup \overline{[n]\smallsetminus
  c}\not\subset F$ and the second yields $\overline{[n]\smallsetminus c}
\subset F$. Our desired relation $c\not\subset F$ will then
follow. 
 
 For the first fact, recall that $[n]\smallsetminus
 B_v = m_v$. Therefore, 
\[
c = [n]\smallsetminus \bigcup\limits_{v\in H_F} B_v =
\bigcap\limits_{v\in H_F} [n]\smallsetminus B_v =
\bigcap\limits_{v\in H_F} m_v,
\] 
so $c$ is the intersection of maximal codewords. As $C$
is max-intersection-complete, $c \in C$, and thus $c\not\in C'$. Now
Theorem~\ref{thm:relationships-1} implies that $c\cup
\overline{[n]\smallsetminus c}\not\in \fC(C')$. 
 
 For the second fact, 
$
\overline{[n]\smallsetminus c} = \overline{[n]\smallsetminus
  ([n]\smallsetminus \bigcup\limits_{v\in H_F}B_v}) =
\overline{\bigcup\limits_{v\in H_F}B_v} = \bigcup\limits_{v\in H_F} \overline{B}_v.
$
\end{proof}

\begin{example}[Example~\ref{ex:code-5}, continued] \label{ex:code-6}
The code $C=\{ \emptyset, 2,3, 12, 13 \}$ is neither intersection-complete nor max-intersection-complete (as $1 = 12 \cap 13 \notin C$).  We can read this information from Theorems~\ref{thm:intersection-complete} and~\ref{thm:max-intersection-complete}, as follows.
For non-intersection-completeness, this can be seen in two ways: first, the pseudomonomial  $x_1(1-x_2)(1-x_3)$
 is in the canonical form of $J_C$, and, second, the intersection of the facet $1 \bar{2} \bar{3}$ with $123$ has size 1, rather than 2 or 3.  
 
 For non-max-intersection-completeness, recall that 
 the minimal prime-sets of $\fC(C')$
are
 $\{\bar{2}\}$ and $\{\bar{3}\}$ (equivalently, the minimal primes of $I(\Delta(C))$ are $\langle x_2 \rangle$ and $\langle x_3 \rangle$). Now, $1 \bar{2} \bar{3}$ is a facet of $\fC(C')$ 
that does not contain 123, 
but for $i\in \{1,2,3\}$, either part \emph{\ref{mic2}\ref{mic2ii}} of Theorem~\ref{thm:max-intersection-complete} is violated (when $i=2,3$)
or part \emph{\ref{mic2}\ref{mic2i}} is violated (when $i=1$).
Alternatively, $\CF(J_C)$ contains the non-monomial $x_1(1-x_2)(1-x_3)$, but
for $i\in \{1,2,3\}$, either part \emph{\ref{mic3}\ref{mic3ii}} of Theorem~\ref{thm:max-intersection-complete} is violated (when $i=2,3$)
or part \emph{\ref{mic3}\ref{mic3i}} is violated (when $i=1$).  Thus, $C$ is not max-intersection-complete.
\end{example}

\section{\newnamecap complexes, neural ideals, and codes} \label{sec:reln-to-ideals}
In this section, we prove Theorems~\ref{thm:relationships-1} and~\ref{thm:relationships-2}.

\subsection{Proof of Theorem~\ref{thm:relationships-1}} \label{sec:reln-to-neural-ideals}
We wish to prove that the following maps are bijections:
\begin{align*}
\begin{array}{ccccc}
    \{\text{pseudomonomials~in~}J_{C'} \}
        & \xleftarrow{\alpha} &
    \{\text{intervals~in~}C\}
     &\xrightarrow{\beta} & 
    \{\text{effective~faces~of~}\fC(C) \}
    \\
        \prod_{i \in c} x_i \prod_{j \in [n] \smallsetminus d} (1-x_j)
        & \leftmapsto &
    [c,d]
     &\mapsto & 
    d \cup \overline{[n] \smallsetminus c}\\
\end{array}
\end{align*}
The fact that $\alpha$ is a bijection is straightforward
from~\cite[Lemma 5.7]{curto2013neural}. To show that $\beta$ is a
bijection, we need to better understand the factor ideal and factor
complex of $C$.

\begin{lemma}
\label{lemma:factorIdealPseudomonomials}
Let $C$ be a code
with neural ideal $J_C$, and let
$\phi$ be a pseudomonomial.
Then $\phi\in J_C$ if and only if 
$\calP(\phi)\in \fI(C)$.
\end{lemma}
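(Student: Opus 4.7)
The plan is to reduce the statement to an equivalence at the level of each primary component. Since $J_C = \bigcap_{h=1}^g P_h$ and $\fI(C) = \bigcap_{h=1}^g \calP(P_h)$, membership in an intersection is membership in each summand, so it suffices to prove that, for every $h$,
$$\phi \in P_h \quad \Longleftrightarrow \quad \calP(\phi) \in \calP(P_h).$$

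First I would record the explicit form of both ideals. By~\eqref{eqn:prime}, we may write $P_h = \langle x_i \mid i\in\sigma_h\rangle + \langle 1-x_j \mid j\in\tau_h\rangle$ for disjoint $\sigma_h,\tau_h\subset [n]$; these generators are precisely the minimal pseudomonomials of $P_h$, so its canonical form is $\{x_i : i\in\sigma_h\}\cup\{1-x_j : j\in\tau_h\}$, and therefore $\calP(P_h) = \langle x_i \mid i\in\sigma_h\rangle + \langle y_j \mid j\in\tau_h\rangle$. Writing $\phi = \prod_{i\in\sigma} x_i \prod_{j\in\tau}(1-x_j)$ with $\sigma\cap\tau=\emptyset$, I would then argue that both conditions $\phi \in P_h$ and $\calP(\phi) \in \calP(P_h)$ are equivalent to the single combinatorial criterion
$$\sigma\cap\sigma_h \neq \emptyset \quad \text{or} \quad \tau\cap\tau_h \neq \emptyset.$$

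The polarized equivalence is immediate from the standard fact that a squarefree monomial lies in a monomial prime ideal iff it is divisible by one of the generators. For the unpolarized equivalence, the forward implication is the same divisibility observation: any element of $\sigma\cap\sigma_h$ or $\tau\cap\tau_h$ yields a generator of $P_h$ dividing $\phi$. The nontrivial direction is the converse: assuming both intersections are empty, I would exhibit a point $w\in\{0,1\}^n$ at which every generator of $P_h$ vanishes but $\phi$ does not, contradicting $\phi\in P_h$. The natural choice is the characteristic vector of $w = \sigma\cup\tau_h$. Using the four disjointness relations $\sigma\cap\sigma_h = \emptyset$, $\tau\cap\tau_h = \emptyset$, $\sigma\cap\tau = \emptyset$, and $\sigma_h\cap\tau_h = \emptyset$, one checks that $w\supseteq\tau_h$ and $w\cap\sigma_h = \emptyset$ (so $P_h$ vanishes at $w$), while $w\supseteq\sigma$ and $w\cap\tau = \emptyset$ (so $\phi(w)=1$).

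The main obstacle is keeping the four disjointness conditions straight in the construction of $w$; the remainder of the argument is formal. Once the per-component equivalence is established, intersecting over $h=1,\ldots,g$ yields the biconditional $\phi\in J_C \Leftrightarrow \calP(\phi)\in\fI(C)$.
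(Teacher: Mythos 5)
Your proposal is correct and follows essentially the same route as the paper: reduce to the prime components $P_h$ of $J_C$ via $\fI(C)=\bigcap_h \calP(P_h)$, and verify the per-component equivalence $\phi\in P_h \Leftrightarrow \calP(\phi)\in\calP(P_h)$, which the paper dismisses as ``straightforward to check'' and which you carry out explicitly through the combinatorial criterion and the evaluation point $w=\sigma\cup\tau_h$. The details all check out, so this is just a fully spelled-out version of the paper's argument.
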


\begin{proof}
Recall the decomposition $J_C = \bigcap^g_{h=1} P_h$
from~\eqref{eqn:primdec}. 
Hence, $\phi \in J_C$ if and only if $\phi \in
P_h$ for all $h$. Given the form~\eqref{eqn:prime} of each
component $P_h$, 
it is straightforward to check that 
$\phi \in P_h$ is equivalent to $\calP(\phi)\in \calP(P_h)$.
Thus, as $\fI(C)= \bigcap \mathcal{P}(P_h)$, 
the desired result follows.
\end{proof}

Our next results shows how to use the factor complex of a code to read
off its codewords.
\begin{lemma}\label{codewordfacelem}
Let $C$ be a code on $n$ neurons. Then $c\in 2^{[n]}$ is a codeword 
of $C$
if
and only if $c\cup \overline{[n]\smallsetminus c}$ is a face of
$\fC(C)$. 
\end{lemma}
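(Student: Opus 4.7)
The plan is to reduce the statement to Lemma~\ref{lemma:factorIdealPseudomonomials} (and the basic Stanley--Reisner correspondence) by recognizing that the set $F:=c\cup \overline{[n]\smallsetminus c}$ has an associated squarefree monomial which is exactly the polarization of the indicator polynomial $\phi_c$ defined in~\eqref{eqn:pseudoMonomial}.

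First I would observe the key algebraic fact about indicator polynomials: $\phi_c\in J_C$ if and only if $c\in C'$ (equivalently, $c\notin C$). One direction is immediate, since if $c\in C'$ then $\phi_c$ is a generator of $J_C$. For the other direction, one uses that $\phi_c$ takes value $1$ at the point $c\in \{0,1\}^n$ and $0$ at every other point, while every element of $J_C$ vanishes on $C=V(J_C)$; so $\phi_c\in J_C$ forces $c\notin C$. (This is standard and is implicitly in~\cite{curto2013neural}.)

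Next I would compute $\calP(\phi_c)$ directly from Definition~\ref{def:polarization}: polarizing $\phi_c=\prod_{i\in c}x_i\prod_{j\in [n]\smallsetminus c}(1-x_j)$ gives $\calP(\phi_c)=\prod_{i\in c}x_i\prod_{j\in [n]\smallsetminus c}y_j$. Under the vertex labeling convention introduced before Remark~2.9, this is precisely the squarefree monomial whose support is the vertex set $F=c\cup\overline{[n]\smallsetminus c}$. By the standard Stanley--Reisner correspondence applied to the ideal $\fI(C)$, the set $F$ is a face of $\fC(C)$ if and only if $\calP(\phi_c)\notin \fI(C)$.

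Finally I would chain these equivalences together, invoking Lemma~\ref{lemma:factorIdealPseudomonomials} for the middle step:
\begin{align*}
c\in C \;&\Longleftrightarrow\; \phi_c\notin J_C \;\Longleftrightarrow\; \calP(\phi_c)\notin \fI(C) \;\Longleftrightarrow\; c\cup\overline{[n]\smallsetminus c}\in\fC(C),
\end{align*}
which is exactly the claim. There is essentially no obstacle here beyond bookkeeping; the content of the lemma is that Lemma~\ref{lemma:factorIdealPseudomonomials} applied to the particular pseudomonomial $\phi_c$ translates membership in the code into a face condition in the factor complex.
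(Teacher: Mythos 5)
Your proof is correct and follows essentially the same route as the paper: establish $c\in C \Leftrightarrow \phi_c\notin J_C$, apply Lemma~\ref{lemma:factorIdealPseudomonomials} to pass to $\calP(\phi_c)\notin\fI(C)$, and conclude via the Stanley--Reisner correspondence. The only cosmetic difference is that you prove the first equivalence directly from the zero-set of $J_C$ instead of citing the lemma in~\cite{curto2013neural}, which is fine.
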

\begin{proof}
By~\cite[Lemma~3.2]{curto2013neural}, $c\in C$ if and only if
$\phi_c=\prod_{i\in c}x_i\prod_{j\notin c} (1-x_j) \notin
J_C$. This is
equivalent to $\calP(\phi_c) \notin \fI(C)$ by Lemma~\ref{lemma:factorIdealPseudomonomials}.
Since $\fI(C)$ is the
Stanley--Reisner ideal of $\fC(C)$, we have that
$\calP(\phi_c) \notin \fI(C)$ exactly when 
$c\cup \overline{[n]\smallsetminus c}$ is a face of
$\fC(C)$, which concludes the proof.
\end{proof}

We now extend Lemma~\ref{codewordfacelem} to
show how to extract the intervals of $C$ from its
factor complex.

\begin{lemma} (Interval-Face Correspondence)\label{IFCorr}
Let $C$ be a code on $n$ neurons, and let $c,d\in 2^{[n]}$. 
Then $[c,d]\subset C$ if and only if $d\cup
\overline{[n]\smallsetminus c}$ 
is a face of $\fC(C)$. 
\end{lemma}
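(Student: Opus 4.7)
My plan is to translate both sides of the stated biconditional into the single algebraic condition $J_C\subset Q$, where $Q:=\langle x_i\mid i\in[n]\smallsetminus d\rangle+\langle 1-x_j\mid j\in c\rangle\subset R$. Since $c\subset d$ (by the convention for Boolean intervals), $R/Q\cong\bbF_2[x_k\mid k\in d\smallsetminus c]$ is a polynomial ring over $\bbF_2$, so $Q$ is a prime ideal of $R$ whose variety in $2^{[n]}$ equals $[c,d]$.

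First I would establish the equivalence $[c,d]\subset C\iff J_C\subset Q$ by a generator-level check. Since $J_C=\langle\phi_v\mid v\in C'\rangle$, it suffices to determine when each $\phi_v$ lies in $Q$. For pseudomonomials, membership in a prime pseudomonomial ideal is equivalent to divisibility by some generator, and a brief inspection yields $\phi_v\in Q$ iff $v\not\subset d$ or $c\not\subset v$, that is, iff $v\notin[c,d]$. Hence $J_C\subset Q$ iff $[c,d]\cap C'=\emptyset$ iff $[c,d]\subset C$.

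Next, invoking the primary decomposition $J_C=\bigcap_{h=1}^gP_h$ from~\eqref{eqn:primdec} together with the primeness of $Q$ in $R$, standard commutative algebra gives $J_C\subset Q\iff P_h\subset Q$ for some $h$. Writing $P_h=\langle x_i\mid i\in\sigma_h\rangle+\langle 1-x_j\mid j\in\tau_h\rangle$ and setting $\lambda_h:=[n]\smallsetminus\sigma_h$, a direct check—using that in $R/Q$ one has $x_i=0$ exactly when $i\in[n]\smallsetminus d$ and $1-x_j=0$ exactly when $j\in c$—gives $P_h\subset Q\iff d\subset\lambda_h$ and $\tau_h\subset c$.

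Finally I would close the loop through the factor complex. Since $\fI(C)=\bigcap_h\calP(P_h)$ with $\calP(P_h)=\langle x_i\mid i\in\sigma_h\rangle+\langle y_j\mid j\in\tau_h\rangle$, the facets of $\fC(C)$ are $\lambda_h\cup\overline{[n]\smallsetminus\tau_h}$. Hence $d\cup\overline{[n]\smallsetminus c}$ is a face of $\fC(C)$ iff it sits inside one such facet, iff $d\subset\lambda_h$ and $\tau_h\subset c$ for some $h$—precisely the conditions obtained above. Stringing the three equivalences together proves the lemma. The delicate point is to justify that $Q$ is prime \emph{in $R$ itself}, not merely in the Boolean quotient $R/(x_i^2-x_i)$, because in the quotient the $P_h$ are generically not prime and the extraction step ``$Q\supset\bigcap_hP_h\Rightarrow Q\supset P_h$ for some $h$'' would no longer be automatic; working with $R$ throughout sidesteps this issue.
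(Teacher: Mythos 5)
Your argument is correct, but it takes a genuinely different route from the paper's. You collapse both implications into a single chain of ideal containments through the auxiliary prime $Q=\langle x_i\mid i\in[n]\smallsetminus d\rangle+\langle 1-x_j\mid j\in c\rangle$: namely $[c,d]\subset C$ iff $J_C\subset Q$ (checked on the generators $\phi_v$, $v\in C'$, via the divisibility criterion for membership of a pseudomonomial in a pseudomonomial prime), iff some component $P_h$ of the decomposition~\eqref{eqn:primdec} is contained in $Q$ (primeness of $Q$ in $R$), iff $d\subset [n]\smallsetminus\sigma_h$ and $\tau_h\subset c$, iff $d\cup\overline{[n]\smallsetminus c}$ lies inside the facet of $\fC(C)$ corresponding to $\calP(P_h)$. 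The paper instead treats the two directions separately: the ``if'' direction follows from Lemma~\ref{codewordfacelem} together with the fact that subsets of faces are faces, while the ``only if'' direction is a contrapositive argument using the Stanley--Reisner decomposition~\eqref{eqn:SRDecomp} and the canonical form $\CF(J_C)$ to produce an explicit non-codeword $c\cup\sigma$ inside $[c,d]$. Your version buys symmetry and self-containment -- it never invokes $\CF(J_C)$ or the codeword--face lemma, and it makes fully explicit the containment criterion (a prime containing a finite intersection contains one of the factors) that underlies the paper's passage from ``not a face'' to a generator of $J_C$ outside the ideal~\eqref{eqn:notAssociated}; the paper's version buys reuse, since Lemmas~\ref{lemma:factorIdealPseudomonomials} and~\ref{codewordfacelem} are needed elsewhere, and its link to $\CF(J_C)$ feeds directly into the facet/maximal-interval part of Theorem~\ref{thm:relationships-1}. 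Two small points to keep airtight: the hypothesis $c\subset d$ that you invoke is indeed part of the definition of a Boolean interval and is genuinely needed (otherwise $Q=R$ and the stated equivalence can fail), and when you read the facets of $\fC(C)$ off $\fI(C)=\bigcap_h\calP(P_h)$ you should either note that this intersection is irredundant (inherited from~\eqref{eqn:primdec}, since polarization preserves containments among such primes) or observe that your final step only needs ``$G$ is a face iff its monomial avoids some $\calP(P_h)$,'' which holds regardless of irredundancy.
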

\begin{proof}
$(\Leftarrow)$ Suppose $d\cup \overline{[n]\smallsetminus c}$ is a
face of $\fC(C)$, and let $w\in [c,d]$. Then $w \cup
\overline{[n]\smallsetminus w} \subset d \cup
\overline{[n]\smallsetminus c}$ is a face of $\fC(C)$ and thus $w\in
C$ by Lemma~\ref{codewordfacelem}. 

$(\Rightarrow)$ We now assume that $d\cup \overline{[n]\smallsetminus c}$
is not a face of $\fC(C)$ and show that $[c,d]$ is not an interval of $C$. 
As $\fI(C)$ is the Stanley--Reisner
ideal of $\fC(C)$, the decomposition~\eqref{eqn:SRDecomp} implies that the ideal
\[\left\langle\{x_i\ |\ i\not\in d\cup \overline{[n]\smallsetminus
    c}\}\cup \{y_j\ |\ \bar{j}\not\in d\cup
  \overline{[n]\smallsetminus c}\}\right\rangle = \big\langle \{x_i \mid i
\in [n]\smallsetminus d \} \cup \{ y_j \mid j \in c\} \big\rangle
\]
is \underline{not} associated to $\fI(C)$, and therefore the following ideal 
is \underline{not} associated to $J_C$:
\begin{equation}
\label{eqn:notAssociated}
\big\langle
\{x_i \mid i \in [n]\smallsetminus d \}\cup \{(1-x_j) \mid j \in c\}\big\rangle.
\end{equation}
 Thus, as $ \CF(J_C)$ is a generating set for $J_C$,
 there exists a pseudomonomial $\phi =
\prod_{i\in
  \sigma} x_i \prod_{j\in \tau }(1-x_j)$ in $ \CF(J_C)$
that is not in the ideal~\eqref{eqn:notAssociated}, and so $\sigma
\subset d$ and $\tau \subset [n]\smallsetminus c$. Note that the
indicator pseudomonomial
$\phi_{c\cup \sigma}$ 
is in $J_C$, as it is divisible by $\phi$. 
We conclude that $\sigma
\cup c \in [c,d] \smallsetminus C$, and so $[c,d] \not\subset C$.
\end{proof}

We can now better understand the facets of $\fC(C)$.

\begin{lemma}\label{propFulRep}
Let $C$ be a code on $n$ neurons. Every facet of $\fC(C)$ is effective.
\end{lemma}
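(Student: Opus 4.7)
The plan is to describe the facets of $\fC(C)$ explicitly via the primary decomposition~\eqref{eqn:primdec} of $J_C$, and then check effectiveness directly. Starting from $\fI(C) = \bigcap_{h=1}^{g} \calP(P_h)$ (Definition~\ref{def:new-cpx}), I would write each pseudomonomial prime as $P_h = \langle \{x_i : i \in \sigma_h\} \cup \{1-x_j : j \in \tau_h\}\rangle$ with $\sigma_h \cap \tau_h = \emptyset$ using~\eqref{eqn:prime}. Its polarization $\calP(P_h) = \langle \{x_i : i \in \sigma_h\} \cup \{y_j : j \in \tau_h\}\rangle$ is a monomial prime ideal of $S$, being generated by distinct variables.

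Next, since $\fI(C)$ is thereby expressed as a finite intersection of prime ideals, any minimal prime of $\fI(C)$ must contain, and hence (being itself prime) equal, one of the $\calP(P_h)$. The minimal primes of $\fI(C)$ therefore form a subset of $\{\calP(P_1),\ldots,\calP(P_g)\}$. Via the Stanley--Reisner decomposition~\eqref{eqn:SRDecomp}, I conclude that every facet of $\fC(C)$ has the explicit form
\[
F_h := ([n]\smallsetminus \sigma_h)\cup \overline{[n]\smallsetminus \tau_h}
\]
for some $h\in\{1,\dots,g\}$.

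Effectiveness of each $F_h$ is then immediate from the disjointness $\sigma_h\cap\tau_h=\emptyset$: for any $i\in [n]$, either $i\notin \sigma_h$, in which case $i\in F_h$, or $i\in \sigma_h$, forcing $i\notin \tau_h$, so that $\overline{i}\in F_h$. Since every facet of $\fC(C)$ is one of the $F_h$, every facet is effective. The only mildly delicate point is the reduction of the minimal primes of $\fI(C)$ to the list $\{\calP(P_h)\}$, and this is a one-line consequence of the standard fact that a prime ideal containing a finite intersection of prime ideals contains one of them; I do not anticipate any serious obstacle.
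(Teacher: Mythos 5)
Your proof is correct and follows essentially the same route as the paper's: identify the facets of $\fC(C)$ with the primes $\calP(P_h)$ coming from the polarized decomposition of $J_C$ via~\eqref{eqn:SRDecomp}, then deduce effectiveness from the disjointness of $\sigma_h$ and $\tau_h$ (equivalently, from the fact that no $\calP(P_h)$ contains both $x_\ell$ and $y_\ell$). You simply make explicit a step the paper leaves implicit, namely that every minimal prime of $\fI(C)$ equals some $\calP(P_h)$ because a prime containing a finite intersection of primes contains one of them.
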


\begin{proof}
By~\eqref{eqn:SRDecomp}, the facets of $\fC(C)$ correspond to
associated primes of $\fI(C)$, which are polarizations of associated
primes of $J_C$. Since the latter primes cannot contain both $x_\ell$ and
$1-x_\ell$, it follows that the former primes cannot contain both $x_\ell$
and $y_\ell$, which concludes the proof.
\end{proof}

\begin{proof}[Proof of Theorem \ref{thm:relationships-1}]
By~\cite[Lemma 5.7]{curto2013neural}, the map $\alpha$ is a bijection,
and the correspondence between minimal pseudomonomials and maximal
intervals follows from the fact for any two intervals $M_1$ and $M_2$
of $C$, we have $M_1\subset M_2$ if and only if $\alpha(M_2)\mid
\alpha(M_1)$. By~Lemma \ref{IFCorr}, 
plus the fact that effective faces have the form $d \cup \overline{[n] \smallsetminus c}$
for some $c \subset d$,
the map $\beta$ is also a
bijection. Lemma~\ref{propFulRep} states that all facets of
$\fC(C)$ are effective, and thus for each facet $F$ we have $F =
\beta(M)$ for some interval $M$ of $C$. The correspondence between
facets and maximal intervals then follows from the fact that for
intervals $M_1$ and $M_2$ of $C$, we have $M_1\subset M_2$ if and
only if $\beta(M_1)\subset \beta(M_2)$.  
\end{proof}

\subsection{Proof of Theorem~\ref{thm:relationships-2}} \label{sec:reln-to-SR-ideals}
We wish to show  that the maps
\begin{align*}
\begin{array}{ccccc}
    \{\text{minimal~primes~of~}I(\Delta(C))\}
        & \xleftarrow{\gamma} &
    \{\text{maximal~codewords~in~}C \}
     &\xrightarrow{\delta} & 
    \left\{ \begin{array}{c}\text{minimal~prime-sets}  \\ \text{of~} \fC(C') \end{array}\right\}
    \\
       \langle x_i \mid i \in [n] \smallsetminus M \rangle 
        & \leftmapsto &
    M
     &\mapsto & 
     \overline{[n] \smallsetminus M}
    \\
\end{array}
\end{align*}
are bijections. The main step is to understand the relationship
between the prime-sets of $\fC(C')$ and the associated primes of $I(\Delta(C))$.

\begin{lemma}
\label{lemma:prime-set}
Let $C$ be a code on $n$ neurons with complement code $C'$.  
A subset $\overline{B}\subset \overline{[n]}$ is a prime-set of
$\fC(C')$ if and only if $\langle x_i \mid i \in B \rangle$
contains $I(\Delta(C))$. Consequently, $\overline{B}$ is a minimal
prime-set of $\fC(C')$ if and only if $\langle x_i \mid i \in B
\rangle$ is a minimal prime of $I(\Delta(C))$.
\end{lemma}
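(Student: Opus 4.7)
The plan is to translate both sides of the claimed equivalence into the same combinatorial statement about codewords of $C$, using the Interval-Face Correspondence (Lemma \ref{IFCorr}) for one side and the primary decomposition \eqref{eqn:SRDecomp} of a Stanley--Reisner ideal for the other.

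First I would unpack the definition of prime-set. By definition, $\overline{B}$ is a prime-set of $\fC(C')$ iff $[n] \cup \overline{B} \notin \fC(C')$. Writing $[n] \cup \overline{B} = d \cup \overline{[n]\smallsetminus c}$ with $d = [n]$ and $c = [n]\smallsetminus B$, Lemma \ref{IFCorr} (applied to $C'$) gives that $[n]\cup \overline{B}$ is a face of $\fC(C')$ iff the Boolean interval $[[n]\smallsetminus B, [n]]$ is contained in $C'$. Taking the contrapositive: $\overline{B}$ is a prime-set of $\fC(C')$ if and only if there exists $w \in [[n]\smallsetminus B, [n]]$ with $w \in C$, equivalently, there exists $w \in C$ with $[n]\smallsetminus w \subset B$.

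Next I would translate the algebraic side. Let $P_B := \langle x_i \mid i \in B\rangle$. From the primary decomposition \eqref{eqn:SRDecomp}, $I(\Delta(C)) = \bigcap_{\sigma \in \text{Facets}(\Delta(C))} \langle x_i \mid i \notin \sigma\rangle$. Since $P_B$ is prime, $P_B \supset I(\Delta(C))$ iff $P_B \supset \langle x_i \mid i \notin \sigma\rangle$ for some facet $\sigma$ of $\Delta(C)$, which occurs iff $[n]\smallsetminus \sigma \subset B$, i.e., $[n]\smallsetminus B \subset \sigma$, for some face $\sigma$ of $\Delta(C)$. Because $\Delta(C)$ is the smallest simplicial complex containing $C$, a subset of $[n]$ is a face of $\Delta(C)$ exactly when it is contained in some $w \in C$. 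Hence $P_B \supset I(\Delta(C))$ iff there exists $w \in C$ with $[n]\smallsetminus B \subset w$, i.e., $[n]\smallsetminus w \subset B$. This matches the condition from the previous paragraph, proving the first assertion.

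For the "consequently" clause, the assignment $\overline{B} \mapsto P_B$ is inclusion-preserving: $\overline{B}_1 \subset \overline{B}_2$ iff $B_1 \subset B_2$ iff $P_{B_1} \subset P_{B_2}$. Since both properties "$\overline{B}$ is a prime-set" and "$P_B \supset I(\Delta(C))$" are upward-closed under inclusion of $\overline{B}$, minimal prime-sets correspond to minimal primes of the form $P_B$ containing $I(\Delta(C))$. By \eqref{eqn:SRDecomp}, every minimal prime of $I(\Delta(C))$ already has the form $P_B$ for some $B \subset [n]$, so these are precisely the minimal primes of $I(\Delta(C))$ (which, being radical, coincide with its associated primes). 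I do not expect a serious obstacle here — the only care needed is to keep straight that we pass through $\fC(C')$ but the Stanley--Reisner side lives on $\Delta(C)$, so that the ``complementation'' inherent in swapping $C$ and $C'$ matches the complementation in the indexing $[n]\smallsetminus B \subset \sigma$ coming from \eqref{eqn:SRDecomp}.
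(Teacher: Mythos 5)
Your proof is correct, and it takes a route that differs from the paper's in which intermediate results it leans on. You translate both sides of the equivalence into one and the same combinatorial condition -- ``there exists $w \in C$ with $[n]\smallsetminus B \subset w$'' -- using Lemma~\ref{IFCorr} applied to $C'$ (with $c=[n]\smallsetminus B$, $d=[n]$) on the prime-set side, and the decomposition~\eqref{eqn:SRDecomp} together with primality of $\langle x_i \mid i \in B\rangle$ and the description of faces of $\Delta(C)$ as subsets of codewords on the algebraic side. The paper instead routes through Theorem~\ref{thm:relationships-1}: a prime-set is detected by the facets of $\fC(C')$ containing $[n]$, these correspond to the monomials in $\CF(J_C)$, and one then invokes the external fact that these monomials generate $I(\Delta(C))$ (Lemma 4.4 of Curto et al.). Your argument is slightly more self-contained, since it avoids both Theorem~\ref{thm:relationships-1} and that cited generation fact, at the cost of redoing in codeword language what those results package; the paper's version is shorter given the machinery already established. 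All the individual steps you use check out: the contrapositive of the interval-face correspondence, the fact that a prime containing a finite intersection contains one of the intersectands, the equivalence $\langle x_i \mid i \notin \sigma\rangle \subset \langle x_i \mid i \in B\rangle \Leftrightarrow [n]\smallsetminus \sigma \subset B$, and the upward-closedness argument for the ``consequently'' clause (which the paper itself leaves implicit) are all sound.
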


\begin{proof}
By definition, $\overline{B}$ is a prime-set of $\fC(C')$ if and only
if $[n] \cup \overline{B}$ is not a face of $\fC(C')$. 
Equivalently,
every facet of $\fC(C')$ of the form $F=[n]\cup \overline{[n]\smallsetminus
  c}$
  satisfies  $B \cap c \neq \varnothing$.
By Theorem~\ref{thm:relationships-1}, $F=[n]\cup \overline{[n]\smallsetminus
  c}$ is a facet of $\fC(C')$ if and
only if the monomial $\prod_{i\in c}x_i$ belongs to $\CF(J_C)$.
Also, 
$B \cap c \neq \varnothing$ if and only if
$\prod_{j\in c}x_j \in \langle x_i \mid i \in B\rangle$. 
Now the result follows, because the monomials in $\CF(J_C)$ generate $I(\Delta(C))$.
\end{proof}

\begin{proof}[Proof of Theorem~\ref{thm:relationships-2}]
The map $\gamma$ is a bijection, 
by~\eqref{eqn:SRDecomp} and the fact that maximal codewords 
of $C$
are facets of $\Delta(C)$, and $I(\Delta(C))$ is its Stanley--Reisner
ideal.
Given that $\gamma$ is a bijection, Lemma~\ref{lemma:prime-set} shows
that $\delta \circ \gamma^{-1}$ 
is a bijection, and so, $\delta$
is a bijection, completing the proof.
\end{proof}

\section{The \newname complex and the polar complex} \label{sec:polar}
In this section, we explore the relationship between the \newname
complex and the
polar complex
introduced in~\cite{gunturkun2017polarization}. 
For a code $C$, the {\em polar complex}, denoted by $\Delta_\calP(C)$,
is the simplicial complex whose Stanley--Reisner ideal is $\calP(J_C)$, 
the polarization of the neural ideal of $C$.  The ideal $\calP(J_C)$ is 
the {\em polar ideal} of $C$.

We first show in an example that polar and \newname complexes
associated to a code are, in general, not the same.

\begin{example}[Example~\ref{ex:code-6}, continued]\label{polcomp}
For the 
 code $C'=\{ 1, 23, 123 \}$, we polarize the neural ideal 
 $J_{C'} =  \langle (1-x_1)(1-x_3),~ (1-x_1)(1-x_2),~ x_2(1-x_3), x_3(1-x_2)\rangle$
to obtain the polar ideal 
 \begin{align*}
 \calP(J_{C'}) = 
 \langle y_1y_3,~ y_1y_2,~ x_2y_3,~ x_3y_2 \rangle
=
  \langle x_2, x_3, y_1 \rangle \cap \langle y_2, y_3 \rangle \cap \langle x_3, y_1, y_3 \rangle \cap \langle x_2, y_2, y_3 \rangle .
 \end{align*}
It follows that the set of facets of the polar complex $\Delta_{\calP}(C')$
is 
$\{1\bar{2}\bar{3},123\bar{1}, 12\bar{2},13\bar{3}\}$. 
Thus, the polar complex has 2 more facets than the corresponding factor complex (recall Example~\ref{ex:code-4}).
\end{example}

On the other hand, the polar ideal and the \newname ideal (and their
corresponding complexes) share many features. 
A first observation is that $\calP(J_C) \subset \fI(C)$ by construction and Lemma~\ref{lemma:factorIdealPseudomonomials}. 
Furthermore,
Lemma~\ref{lemma:factorIdealPseudomonomials} is valid when we replace
$\fI(C)$ by $\calP(J_C)$
\cite[Theorem 3.2]{gunturkun2017polarization}, and consequently
Lemma~\ref{codewordfacelem} holds for $\Delta_\calP(C)$.
Lemma~\ref{IFCorr} also is valid for $\Delta_\calP(C)$
\cite[Corollary 5.2]{gunturkun2017polarization}.

As Example~\ref{polcomp} illustrates, $\fI(C)$ strictly contains
$\calP(J_C)$ in general.
A larger ideal makes for a smaller simplicial complex. The following
result explains the relationship between $\fC(C)$ and $\Delta_\calP(C)$.

\begin{proposition} \label{prop:factor-polar}
For every code $C$, the \newname complex $\fC(C)$ is the
subcomplex of the polar complex $\Delta_\calP(C)$ whose facets are the
effective facets of $\Delta_\calP(C)$.
\end{proposition}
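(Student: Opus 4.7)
The plan is to establish a double inclusion, using three tools already at hand: the containment $\calP(J_C)\subseteq \fI(C)$ noted just before the proposition (which gives $\fC(C)\subseteq \Delta_\calP(C)$ as simplicial complexes), Lemma~\ref{propFulRep} (every facet of $\fC(C)$ is effective), and the Interval--Face Correspondence Lemma~\ref{IFCorr} together with its analogue for the polar complex in~\cite[Corollary~5.2]{gunturkun2017polarization}. Write $\Delta^{\mathrm{eff}}$ for the subcomplex of $\Delta_\calP(C)$ consisting of all subsets of effective facets of $\Delta_\calP(C)$; proving $\fC(C) = \Delta^{\mathrm{eff}}$ is equivalent to the proposition.

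The preliminary observation I would record is that any superface of an effective face is effective, since the condition ``$i\in F$ or $\overline{i}\in F$ for all $i\in [n]$'' is preserved under enlargement. Combined with IFCorr for both complexes, this yields that the effective faces of $\fC(C)$ and of $\Delta_\calP(C)$ form the same collection of subsets of $[n]\cup\overline{[n]}$: both are in bijection with the intervals $[c,d]\subseteq C$ via $[c,d]\mapsto d\cup \overline{[n]\smallsetminus c}$.

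For $\Delta^{\mathrm{eff}}\subseteq \fC(C)$, I would take $F\in\Delta^{\mathrm{eff}}$, lift it to an effective facet $F^* = d\cup\overline{[n]\smallsetminus c}$ of $\Delta_\calP(C)$, apply the polar version of IFCorr to obtain $[c,d]\subseteq C$, and then apply Lemma~\ref{IFCorr} to conclude $F^*\in \fC(C)$, whence $F\in \fC(C)$. For the reverse inclusion $\fC(C)\subseteq \Delta^{\mathrm{eff}}$, I would take $F\in \fC(C)$ and enlarge it to a facet $F'$ of $\fC(C)$, which is effective by Lemma~\ref{propFulRep}; it remains to verify that $F'$ is actually a facet of $\Delta_\calP(C)$. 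Any strict superface $G\supsetneq F'$ in $\Delta_\calP(C)$ would again be effective (by the observation above), hence would correspond via the polar IFCorr to an interval strictly containing $[c,d]$, and hence would lie in $\fC(C)$ by Lemma~\ref{IFCorr}, contradicting the maximality of $F'$ in $\fC(C)$.

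The main obstacle, as I see it, is this last step: promoting a facet of the smaller complex $\fC(C)$ to a facet of the larger complex $\Delta_\calP(C)$. The resolution rests on the preservation of effectiveness under taking superfaces, coupled with the fact that the effective faces of the two complexes agree as collections of subsets of $[n]\cup\overline{[n]}$.
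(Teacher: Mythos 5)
Your proof is correct and takes essentially the same route as the paper: both rest on the containment $\calP(J_C)\subset \fI(C)$, Lemma~\ref{propFulRep}, and the interval--face correspondence (Lemma~\ref{IFCorr} together with its polar analogue from~\cite[Corollary 5.2]{gunturkun2017polarization}). The only difference is minor bookkeeping: where the paper also invokes~\cite[Corollary 5.3]{gunturkun2017polarization} (effective facets of $\Delta_\calP(C)$ correspond to maximal intervals), you get the needed facet statements directly from the correspondence plus your (correct) observation that any superface of an effective face is effective.
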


\begin{proof}
Lemma~\ref{propFulRep} states that all facets of $\fC(C)$ are
effective, and $\calP(J_C)\subset \fI(C)$ implies that $\Delta_{\cap}(C)\subset \Delta_{\calP}(C)$.
So, it suffices to show that every effective facet of
$\Delta_{\calP}(C)$ is a face of $\fC(C)$.
By \cite[Corollaries~5.2~and~5.3]{gunturkun2017polarization}, the
effective facets of $\Delta_\calP(C)$ are of the form
$d\cup\overline{[n]\smallsetminus c}$  where $[c,d]$ is a maximal
interval of $C$. Now apply Lemma~\ref{IFCorr}.
\end{proof}

The key difference between the \newname complex and the polar complex
of a code is that the latter can have defective facets. While these facets
hold useful information about quotient codes, as shown
in~\cite{gunturkun2017polarization}, the structure of the smaller \newname
complex is more convenient for our purposes here.

\section{Computational Considerations}
\label{sec:algorithm}

The main result of this article, Theorem~\ref{thm:main-intro},
gives a new method for checking whether a code is max-intersection-complete (Algorithm~\ref{alg:thealg} below).
In this section we provide an infinite family $\mathfrak{F}$ of codes 
for which this method is more efficient at checking
max-intersection-completeness than the natural brute-force approaches.


In order to analyze the runtime of our proposed algorithm, we write it
explicitly below. Correctness follows directly from
Theorem~\ref{thm:main-intro} and the correspondence between maximal
codewords of $C$ and minimal primes of $I(\Delta(C))$ in Theorem
\ref{thm:relationships-2}.  

%


\begin{algorithm}[H]\label{alg:thealg}
\SetKwInOut{Input}{input}
\SetKwInOut{Output}{output}
{\bf input:}\\
{\begin{itemize}
\item[(1)] $C$, a neural code on $n$ neurons
\item[(2)] $C_{\text{{max}}}$, the list of the maximal codewords of $C$
\item[(3)] $\CF(J_C)$, the canonical form of the neural ideal of $C$
\end{itemize}
}
\Output{\texttt{True} if $C$ is max-intersection-complete and \texttt{False} otherwise}
{\Init Min}$(I(\Delta(C))=\emptyset$;

\For{ {\sc (First Loop)} $c\in C_{\text{\normalfont{{max}}}}$}
{
Add $\langle\{x_i\ |\ i\in [n] \smallsetminus c\} \rangle$ to {Min}$(I(\Delta(C))$;}
\For{ {\sc (Outer Loop)} non-monomial $\phi\in CF(J_C)$} 
{
	\For{{\sc (Middle Loop)} $s$ such that $(1-x_s)|\phi$}
	{
		\For{{\sc (Inner Loop)} $P\in$         
		  \normalfont{{Min}}$(I(\Delta(C))$}
		{
			\If{ $x_s\in P$ and no $x_r\in P$ divides $\phi$} 
			{
                Go back to {\sc Middle Loop} (next iteration of loop, or -- if none -- end loop);
			}
		}
        Go back to {\sc Outer Loop} (next iteration of loop, or -- if none -- end loop);
	}
    \Return \texttt{False};\\
    \EndAl
}
\Return \texttt{True};\\
\EndAl
\caption{Checking Max-Intersection-Completeness}
\end{algorithm}

\begin{remark}
  \label{rmk:Drawback}
  We point out that Algorithm~\ref{alg:thealg} requires $\CF(J_C)$ as part of its input,
  but the brute-force methods below do not. For this reason, a
  complete runtime analysis of Algorithm~\ref{alg:thealg} 
  requires knowing the complexity of computing canonical forms, which
  is not currently well understood. The
  canonical form algorithm given in~\cite{neural-ideal-sage} is easily
  seen to be exponential in the number of neurons. A faster procedure
  for finding $\CF(J_C)$ would be very desirable, and would have
  implications beyond this article.
%
%
%
%
\end{remark}

We now define $\mathfrak{F}$ to be the family of all neural codes $C$
satisfying the following properties:
\begin{itemize}
    \item[(i)] The number of maximal intervals of $C'$ is at most $n$,
      the number of neurons of $C$.
    \item[(ii)] There exists a maximal interval $[c,d]$ of $C'$ with $d\neq [n]$ and $|d\smallsetminus c | = n/2$.
    \item[(iii)] There exists a maximal interval $[a,[n]]$ of $C'$, where $a$ contains $n/2$ neurons.
    \item[(iv)] For every maximal interval of $C'$ that has the form
      $[b,[n]]$, if $a\neq b$ then $a\cap b = \emptyset $. 
    \item[(v)] $C'$ contains at most $\log_2(n)$ maximal intervals of the form $[b,[n]]$.
\end{itemize}

Note that $\mathfrak{F}$ is infinite, since the number of neurons has not
been fixed.
We emphasize that a code $C \in \mathfrak{F}$ is given as the maximal
intervals of $C'$. This information is equivalent to knowing
$\CF(J_C)$. Thus, for codes in $\mathfrak{F}$, the issue raised in
Remark~\ref{rmk:Drawback} is avoided.
Finally, it can be checked that $\mathfrak{F}$ contains infinitely
many max-intersection-complete codes, and infinitely many codes which
are not max-intersection-complete.

We compare Algorithm~\ref{alg:thealg} to two brute-force methods for checking max-intersection-completeness:

\textbf{Brute Force 1:} Take all possible intersections of maximal
codewords of $C$, and check whether all are contained in $C$. 

\textbf{Brute Force 2:} For every $\sigma \in C'$, compute
$c_{\sigma}$, the intersection of all maximal codewords of $C$ that
contain $\sigma$. Then check whether $c_{\sigma} = \sigma$.  

\begin{proposition} \label{prop:alg}
For every code $C$ in $\mathfrak{F}$, Brute Force 1 and Brute Force 2
are exponential in the number of neurons, while Algorithm
\ref{alg:thealg} is sub-exponential in the number of neurons.  
\end{proposition}

\begin{proof}
  We begin by showing that the number of maximal codewords of any $C
  \in \mathfrak{F}$
is at least $n/2$ and at most $n^{\log_2(n)}$.
Recall that these maximal codewords are in bijection with the minimal
primes of $I(\Delta(C))$ (Theorem \ref{thm:relationships-2}),  
and also that 
\begin{equation}\label{eqn:monomial_gens}
I(\Delta(C)) = \langle \{x_{\sigma} \ |\ [\sigma,[n]] \text{ a maximal interval of } C' \} \rangle.
\end{equation}
(Recall that for $\sigma\subseteq [n]$, we use the notation $x_{\sigma}$ to denote the monomial $\prod_{i\in \sigma} x_i$.)

The monomial generators of $I(\Delta (C))$ in~\eqref{eqn:monomial_gens} satisfy the following:
\begin{itemize}
    \item[($\ast$)] there is a generator $x_a$ of degree $n/2$ (from condition (iii)),
    \item[($\ast \ast$)] if $x_b \neq x_a$ is a generator of $I(\Delta(C))$, then $\gcd(x_a,x_b) = 1$ (from condition (iv)), and
    \item[($\ast \ast \ast$)] there are at most $\log_2(n)$ generators (from condition (v)).
\end{itemize}

We calculate the upper bound by observing that every minimal prime $P$
of $I(\Delta(C))$ has a generating set $G_P \subset
\{x_1,x_2,\dots,x_n\},$ with every monomial in
\eqref{eqn:monomial_gens} divisible by at least one $x_i\in
G_P$. It follows that the number of ways to choose
some divisor $x_i$ from each generator of $I(\Delta(C))$ is an upper
bound on the number of minimal primes. This upper bound is the product
of the degrees of the monomial generators of $I(\Delta(C))$, which in
turn is bounded above by $n^{N_{\text{mon}}}$, where $N_{\text{mon}}$
is the number of monomials in $CF(J_C)$. By ($\ast \ast \ast$) there
are at most $\log_2(n)$ such monomials, so  
the number of minimal primes -- and thus the number of maximal codewords of $C$ -- is at most $n^{\log_2(n)}$.

For the lower bound, we first note that by ($\ast$) there is a
monomial generator $x_a$ of $I(\Delta(C))$ that has degree $n/2$. If
$I(\Delta(C)) = \langle x^a \rangle$, then $I(\Delta(C))$ has $n/2$
minimal primes.
If $I(\Delta(C))$ strictly contains $\langle x^a \rangle$, then let $\widetilde{P}$ be a minimal
prime of the following nonzero ideal:
\[
\widetilde{I} ~:=~ \langle x_b  \mid  [b,[n]] \text{ is a maximal
      interval of } C' \text{ and } b\neq a  \rangle ~ \subset ~
    I(\Delta(C)).
\]
For every $x_i$ that divides $x_a$, we  
claim that
$P_i = \langle x_i\rangle + \widetilde{P}$ 
is a minimal prime of $I(\Delta(C))$. 
By construction, $P_i$ contains $I(\Delta(C))$.
If $Q\subsetneq P_i$ is another prime monomial ideal, either $x_i\not\in Q$ or 
there exists $x_j \in \widetilde P \setminus Q$. 
In the first case, condition ($\ast \ast$) implies that $x_a\not\in
Q$.  In the second case, by ($\ast \ast$) and the fact that
$\widetilde P$ is a minimal prime of $\widetilde I$, it follows that $\widetilde I \not\subset Q$.
In both cases $Q$ does not contain $I(\Delta(C))$, and consequently $P_i$ is a minimal prime of $I(\Delta(C))$. As a distinct minimal prime $P_i = \langle x_i\rangle + \widetilde{P}$ arises from each of the $n/2$ divisors $x_i$ of $x_a$, 
the number of minimal primes -- and also the number of maximal codewords of $C$ -- is at least $n/2$.

Having found the upper and lower bounds on the number of maximal codewords of a code $C\in \mathfrak{F}$, 
we now use these bounds to 
analyze the brute-force methods and Algorithm \ref{alg:thealg}.

As there are at least $n/2$ maximal codewords, Brute Force 1 checks at least $2^{n/2}$ intersections of maximal codewords, and so is exponential in the number of neurons. 

Next, Brute Force 2 checks whether each codeword of $C'$ 
is contained in each maximal codeword of $C$.  So, the runtime will be at least the number of codewords of $C'$ times the number of maximal codewords of $C$. 
There are at least $n/2$ maximal codewords and, by condition (ii), at least $2^{n/2}$ elements of $C'$. Thus, the runtime is at least $(n/2)*2^{n/2}$, and so is exponential in $n$.

For Algorithm \ref{alg:thealg}, 
First Loop iterates over the maximal codewords of $C$ (of which there are at most $n^{\log_2(n)}$), and the runtime of each iteration is at most $n$.  So, the runtime of First Loop is $O(n^{1+\log_2(n)})$.
The runtime for the  subsequent part of the algorithm is the product
of the number of iterations of the Outer Loop, the number of
iterations of the Middle Loop, and the runtime of the Inner
Loop. Since the Outer Loop iterates over a subset of $\CF(J_C)$, by
Theorem \ref{thm:relationships-1} and condition (i) there are at most
$n$ such iterations. Since the Middle Loop iterates over the neurons,
there are at most $n$ iterations of this loop. Finally, the Inner Loop
iterates over the number of minimal primes of $I(\Delta(C))$, of which
there are at most $n^{\log_2(n)}$. Checking to see if $x_s$ is in some
minimal prime $P$ takes at most $n$ steps (check each generator of
$P$) and checking to see if any $x_r\in P$ divides $\phi$ takes
at most $n^2$ steps (compare each generator of $P$ with each divisor
of $\phi$). Thus the runtime of Inner Loop is at most
$n^{3+\log_2(n)}$. We conclude that the combined runtime of the Outer, Middle, and Inner Loops is $O(n^{5+\log_2(n)})$, 
which, it is straightforward to check, is sub-exponential in $n$.
\end{proof}

%



\bibliographystyle{siam}
\bibliography{Bibliography.bib}



\end{document}